\documentclass[final,hidelinks,onefignum,onetabnum]{siamonline220329}
\usepackage{graphicx} 
\usepackage{amsmath,amssymb}
\usepackage{comment}
\usepackage{lipsum}
\usepackage{amsfonts}
\usepackage{graphicx}
\usepackage{epstopdf}
\usepackage{algorithmic}
\usepackage{enumitem}
\usepackage[sort]{cite}
\usepackage{nicematrix}
\usepackage{tikz}
\usetikzlibrary{arrows.meta}
\usepackage{pgfplots}
\usepgflibrary{shadings}
\usetikzlibrary{shadings}
\usepackage{overpic}
\ifpdf
  \DeclareGraphicsExtensions{.eps,.pdf,.png,.jpg}
\else
  \DeclareGraphicsExtensions{.eps}
\fi


\newcommand{\cof}{\textrm{cof}}

\newsiamremark{remark}{Remark}
\newsiamremark{hypothesis}{Hypothesis}
\newsiamremark{ex}{Example}
\crefname{hypothesis}{Hypothesis}{Hypotheses}
\newsiamthm{claim}{Claim}

\headers{Emil Graf and Alex Townsend}{Instability of Algebraic Rootfinding Methods}

\title{Numerical Instability of Algebraic Rootfinding Methods\thanks{\vspace{-1em} \funding{E.G. was supported by NSF GRFP (DGE-2139899). A.T. was supported by the Office of Naval Research under Grant Number N00014-23-1-2729 and NSF CAREER (DMS-2045646).}}}

\author{Emil Graf and Alex Townsend}

\usepackage{amsopn}
\DeclareMathOperator{\diag}{diag}


\begin{document}

\maketitle

\begin{abstract}
    We demonstrate that the most popular variants of all common algebraic multidimensional rootfinding algorithms are unstable by analyzing the conditioning of subproblems that are constructed at intermediate steps. In particular, we give multidimensional polynomial systems for which the conditioning of a subproblem can be worse than the conditioning of the original problem by a factor that grows exponentially with the number of variables.
\end{abstract}

\begin{keywords}
    polynomial systems, numerical instability,  rootfinding
\end{keywords}

\begin{MSCcodes}
    13P15, 65H04, 65F35
\end{MSCcodes}

\section{Introduction}

For over a decade, the scientific community has been searching for a numerically robust multivariate polynomial rootfinder  \cite{nakatsukasa2015bezout, noferini2016instability,telen2018normalform, telen2018normalform2, mourrain2021normalform,jonsson2005macaulay,plestenjak2016roots,rouillier1999rur,cox2005ag}. We are yet to find one. This paper demonstrates that the most popular variants of all common algebraic multidimensional rootfinding algorithms are exponentially unstable. This includes hidden-variable resultants (see \cite{noferini2016instability}), Gr\"obner bases (see \cref{sec:GB}), the rational univariate representation (see \cref{sec:RUR}), multiparameter eigenvalue problems (see \cref{sec:MEP}), normal form methods (see \cref{sec:MS}),  and Macaulay resultants (see \cref{sec:Mac}). We are stuck waiting for new ideas to emerge from algebraic geometry.

A multivariate rootfinder computes all the solutions of a polynomial system of the form:
\begin{equation} \label{eq:polysystem}
    \begin{pmatrix}
        p_1(x_1,\ldots,x_d) \\
        \vdots \\
        p_d(x_1,\ldots,x_d)
    \end{pmatrix} = \begin{pmatrix}
        0 \\
        \vdots \\
        0
    \end{pmatrix}, \quad (x_1,\ldots,x_d) \in \mathbb{C}^d,
\end{equation}
where $d \geq 2$, and $p_1,\ldots,p_d$ are polynomials in $x_1,\ldots,x_d$ with complex coefficients. We assume that the system \cref{eq:polysystem} has a finite number of simple roots with no roots at infinity. A multivariate rootfinder is a global rootfinder aimed at not missing any solutions to \cref{eq:polysystem}. Global rootfinders differ from local ones such as Newton--Raphson,  as they can find all the equilibrium points of a dynamical system or compute the global maximum of a polynomial.

Once we have a rootfinder that does not miss any solutions, we want the computed solutions to be accurate. Since, at some stage of a rootfinder, there must be a numerical calculation, as proved by the Abel--Ruffini theorem \cite{abel2012equations,ruffini1813riflessioni}, the scientific community focuses on well-conditioned problems. For example, consider Wilkinson's polynomial \cite{wilkinson1984polynomial} given by
$
    w(x) = (x-1)(x-2)\cdots(x-20).
$
If we increase the coefficient of $x^{19}$ by $2^{-23} \approx .0000001$, the root at $x=20$ moves to $x \approx 20.8$. Computing the roots of $w(x)$ given its coefficients is an ill-conditioned problem, i.e., small perturbations of its coefficients can cause large changes to its roots.  We are not interested in computing the roots of $w(x)$ as it is numerically a frivolous task\cite{wilkinson1984polynomial}. Instead, in scientific computing, we mainly focus on well-conditioned problems.

\subsection{Motivation for Eigenvalue-Based Approaches} Given a well-conditioned rootfinding problem, we would like to derive a stable algorithm to solve it. Roughly speaking, an algorithm is stable if it computes an accurate solution to well-conditioned problems (see \cref{subsec:cond}). The search for a stable algorithm for multivariate polynomial rootfinding is motivated by the existence of stable algorithms for many related problems (see \cref{tab:evp}). All the univariate problems, such as eigenproblems, univariate polynomial rootfinding, and matrix polynomial eigenproblems, have stable algorithms. Likewise, there are stable algorithms to solve linear systems of the form $A \mathbf{x} = \mathbf{b}$, which are multivariate.

\begin{table}[]
    \centering
    \caption{Types of linear equations,  rootfinding,  and eigenproblems}
    \resizebox{\columnwidth}{!}{
    \begin{tabular}{ccc}
    \multicolumn{3}{c}{Univariate Problems} \\
    \hline
    & Scalar Problems & Matrix Problems \\
    && \\
    Linear Problems & Basic Algebra & Generalized Eigenproblem\\
    & $ax = b$ & $A\mathbf{x} = \lambda B \mathbf{x}$ \\
    & \\
    Polynomial Problems & Polynomial Rootfinding & Polynomial Eigenproblem \\
    &$p(x) = 0$ & $P(x) \mathbf{v} = 0$ \\
    \\
    \hline\\
    \multicolumn{3}{c}{Multivariate Problems} \\
    \hline
    & Scalar Problems & Matrix Problems \\
    && \\
    Linear Problems & Linear System & Multivariate Eigenproblem\\
    & $A \mathbf{x} = \mathbf{b}$ & $W_i(\mathbf{x}) \mathbf{v}_i = 0,1 \leq i \leq d$ \\
    && \\
    Polynomial Problems & Multivariate Polynomial Rootfinding & Multivariate Polynomial Eigenproblem \\
    & $p_i(\mathbf{x}) = 0,1 \leq i \leq d$ & $P_i(\mathbf{x})\mathbf{v}_i = 0,1 \leq i \leq d$ \\
    && \\
    \hline
	\end{tabular}}
    \label{tab:evp}
\end{table}

One idea that works extremely well for univariate problems is to convert them into generalized eigenproblems. Instead of solving a rootfinding problem directly, one first constructs an eigenproblem whose eigenvalues match the roots. The companion matrix of a polynomial $p$ is an example of this, as its characteristic polynomial is $p$, so its eigenvalues are the roots of $p$ \cite[Chapt.~7]{golub2013matrix}.  One can solve the companion eigenproblem using an eigensolver,  which is one of the most reliable algorithms in scientific computing. For roots in $[-1,1]$, this is a stable algorithm for univariate polynomial rootfinding \cite{nakatsukasa2016stability,noferini2017stability}. Algebraic rootfinders attempt the same conversion, i.e., each algorithm converts \cref{eq:polysystem} into one or more generalized eigenvalue problems (GEPs).  For polynomial systems in \cref{eq:polysystem} in $d$ variables,  one usually constructs $d$ GEPs, the eigenvalues of which give the coordinates of each root. The Macaulay resultant method (see \cref{sec:Mac}) is an exception as it constructs a single GEP and extracts the roots from the eigenvectors, not eigenvalues. Analogously to the univariate case, one hopes that the eigenproblems are as well-conditioned as the original rootfinding problem, which is necessary for a stable algorithm. Unfortunately, this is not the case.



\pgfplotsset{every axis title/.style={at={(0.5,1)},above,yshift=-1pt},
			every axis x label/.style={at={(.5,-.15)},anchor=center},
        			every axis y label/.style={at={(-.12,.5)},rotate=90,anchor=center}}

\begin{figure}
  \centering
  
  \vspace{-.3cm}
  
  \begin{Overpic}{
  \begin{tikzpicture}
    \begin{axis}[
      title={Bezout Resultant},
      height=2.3in,width=3.1in,xmin =-8,xmax=0,ymin=0,ymax=16,
      xlabel={$\log (\sigma)$}, 
      ylabel={Digits of Accuracy}
      ]
      \addplot [color=teal,thick,dotted,mark = none] table[col sep=comma,x = {x}, y ={y3}] {D2data.dat};
      \addplot [color=red,very thick,mark = none] table[col sep=comma,x = {x}, y ={y}] {D2data.dat};
      \addplot [color=blue,very thick,only marks] table[col sep=comma,x = {x}, y ={S5}] {D2data.dat};
       \end{axis}
  \end{tikzpicture}}
  \put(6,69){\textbf{(a)}}
  \put(32,47.5){\rotatebox{18.5}{\textcolor{teal}{Stable Performance}}}
  \put(21,35){\rotatebox{-3}{\textcolor{blue}{Practical}}}
  \put(41,35){\rotatebox{34}{\textcolor{blue}{Performance}}}
  \put(50,32){\rotatebox{34}{\textcolor{red}{Our Theory}}}
  \end{Overpic}
  \begin{Overpic}{
  \begin{tikzpicture}
    \pgfplotsset{every axis title/.style={at={(0.5,1)},above,yshift=-3pt}}
    \begin{axis}[
      title={Sylvester Resultant},
      height=2.3in,width=3.1in,xmin =-8,xmax=0,ymin=0,ymax=16,
      xlabel={$\log (\sigma)$} 
      ]
      \addplot [color=teal,thick,dotted,mark = none] table[col sep=comma,x = {x}, y ={y3}] {D2data.dat};
      \addplot [color=red,very thick,mark = none] table[col sep=comma,x = {x}, y ={y}] {D2data.dat};
      \addplot [color=blue,very thick,only marks] table[col sep=comma,x = {x}, y ={S6}] {D2data.dat};
       \end{axis}
  \end{tikzpicture}}
  \put(0,73){\textbf{(b)}}
  \put(28,50){\rotatebox{19}{\textcolor{teal}{Stable Performance}}}
  \put(15,37){\rotatebox{-3}{\textcolor{blue}{Practical}}}
  \put(37,37.5){\rotatebox{34}{\textcolor{blue}{Performance}}}
  \put(43,31){\rotatebox{34}{\textcolor{red}{Our Theory}}}
  \end{Overpic}
  
   \vspace{-.1cm}
  
  \begin{Overpic}{
  \begin{tikzpicture}
    \begin{axis}[
      title={Gr\"obner Basis Elimination},
      height=2.3in,width=3.1in,xmin =-8,xmax=0,ymin=0,ymax=16,
      xlabel={$\log (\sigma)$}, 
      ylabel={Digits of Accuracy}
      ]
      \addplot [color=teal,thick,dotted,mark = none] table[col sep=comma,x = {x}, y ={y3}] {D2data.dat};
      \addplot [color=red,very thick,mark = none] table[col sep=comma,x = {x}, y ={y5}] {D2data.dat};
      \addplot [color=blue,very thick,only marks] table[col sep=comma,x = {x}, y ={S7}] {D2data.dat};
       \end{axis}
  \end{tikzpicture}}
   \put(6,69){\textbf{(c)}}
  \put(32,47.5){\rotatebox{18.5}{\textcolor{teal}{Stable Performance}}}
  \put(33,29){\rotatebox{0}{\textcolor{blue}{Practical}}}
  \put(53,31){\rotatebox{44}{\textcolor{blue}{Performance}}}
  \put(65,29){\rotatebox{44}{\textcolor{red}{Our Theory}}}
  \end{Overpic}
  \begin{Overpic}{
  \begin{tikzpicture}
  \pgfplotsset{every axis title/.style={at={(0.5,1)},above,yshift=-3pt}}
    \begin{axis}[
      title={Rational Univariate Representation},
      height=2.3in,width=3.1in,xmin =-8,xmax=0,ymin=0,ymax=16,
      xlabel={$-\log (c)$} 
      ]
      \addplot [color=teal,thick,dotted,mark = none] table[col sep=comma,x = {x2}, y ={y4}] {D2data.dat};
      \addplot [color=red,very thick,mark = none] table[col sep=comma,x = {x2}, y ={y2}] {D2data.dat};
      \addplot [color=blue,very thick,only marks] table[col sep=comma,x = {x2}, y ={S4}] {D2data.dat};
       \end{axis}
  \end{tikzpicture}}
  \put(0,73){\textbf{(d)}}
  \put(27.5,49.5){\rotatebox{19}{\textcolor{teal}{Stable Performance}}}  
  \put(31,31){\rotatebox{2}{\textcolor{blue}{Practical}}}
  \put(52.2,32.5){\rotatebox{45}{\textcolor{blue}{Performance}}}
  \put(65,31.2){\rotatebox{45}{\textcolor{red}{Our Theory}}}
  \end{Overpic}
  
  \vspace{-.1cm}
  
  \begin{Overpic}{
  \begin{tikzpicture}
  \pgfplotsset{every axis title/.style={at={(0.5,1)},above,yshift=-3pt}}
    \begin{axis}[
      title={Two-Parameter Eigenproblem},
      height=2.3in,width=3.1in,xmin =-8,xmax=0,ymin=0,ymax=16,
      xlabel={$\log (\sigma)$}, ylabel={Digits of Accuracy}
      ]
      \addplot [color=teal,thick,dotted,mark = none] table[col sep=comma,x = {x}, y ={y3}] {D2data.dat};
      \addplot [color=red,very thick,mark = none] table[col sep=comma,x = {x}, y ={y}] {D2data.dat};
      \addplot [color=blue,very thick,only marks] table[col sep=comma,x = {x}, y ={S2}] {D2data.dat};
       \end{axis}
  \end{tikzpicture}}
  \put(6,69){\textbf{(e)}}
  \put(32,47.5){\rotatebox{18.5}{\textcolor{teal}{Stable Performance}}}
  \put(19,34.5){\rotatebox{3}{\textcolor{blue}{Practical}}}
  \put(39,36.5){\rotatebox{34}{\textcolor{blue}{Performance}}}
  \put(52,33){\rotatebox{34}{\textcolor{red}{Our Theory}}}
  \end{Overpic}
  \begin{Overpic}{
  \begin{tikzpicture}
    \begin{axis}[
      title={Normal Form Method},
      height=2.3in,width=3.1in,xmin =-8,xmax=0,ymin=0,ymax=16,
      xlabel={$\log (\sigma)$} 
      ]
      \addplot [color=teal,thick,dotted,mark = none] table[col sep=comma,x = {x}, y ={y3}] {D2data.dat};
      \addplot [color=red,very thick,mark = none] table[col sep=comma,x = {x}, y ={y}] {D2data.dat};
      \addplot [color=blue,very thick,only marks] table[col sep=comma,x = {x}, y ={S1}] {D2data.dat};
       \end{axis}
  \end{tikzpicture}}
  \put(0,73){\textbf{(f)}}
  \put(28,50){\rotatebox{19}{\textcolor{teal}{Stable Performance}}}
  \put(17,36){\rotatebox{-4}{\textcolor{blue}{Practical}}}
  \put(39,35.5){\rotatebox{34}{\textcolor{blue}{Performance}}}
  \put(44,30){\rotatebox{34}{\textcolor{red}{Our Theory}}}
  \end{Overpic}
  
  \vspace{-.1cm}
  
  \begin{Overpic}{
  \begin{tikzpicture}
  \pgfplotsset{every axis title/.style={at={(0.5,1)},above,yshift=-3pt}}
    \begin{axis}[
      title={Macaulay Resultant},
      height=2.3in,width=3.1in,xmin =-8,xmax=0,ymin=0,ymax=16,
      xlabel={$\log (\sigma)$}, ylabel={Digits of Accuracy}
      ]
      \addplot [color=teal,thick,dotted,mark = none] table[col sep=comma,x = {x}, y ={y3}] {D2data.dat};
      \addplot [color=red,very thick,mark = none] table[col sep=comma,x = {x}, y ={y}] {D2data.dat};
      \addplot [color=blue,very thick,only marks] table[col sep=comma,x = {x}, y ={S3}] {D2data.dat};
       \end{axis}
  \end{tikzpicture}}
  \put(6,69){\textbf{(g)}}
  \put(32,47.5){\rotatebox{18.5}{\textcolor{teal}{Stable Performance}}}
  \put(22,32.5){\rotatebox{3}{\textcolor{blue}{Practical}}}
  \put(42,34.5){\rotatebox{34}{\textcolor{blue}{Performance}}}
  \put(52,32){\rotatebox{34}{\textcolor{red}{Our Theory}}}
  \end{Overpic}
  
  \vspace{-.6cm}
  
  \caption{Performance on a bivariate version of the system in \cref{ex:dev} for all methods except for Gr\"obner basis elimination (\cref{ex:GBdev}) and the rational univariate representation (\cref{ex:Hypercube}), with the root shifted to $(\frac{1}{3},\frac{1}{3})$. The deviation of the practical performance from our theory is explained by the extreme proximity of the roots when $\sigma$ is very small, which indicates that we should not expect conditioning analysis to be a good predictor. The methods are extremely inaccurate when $\sigma$ is small.}
  \label{fig:devex}
\end{figure}
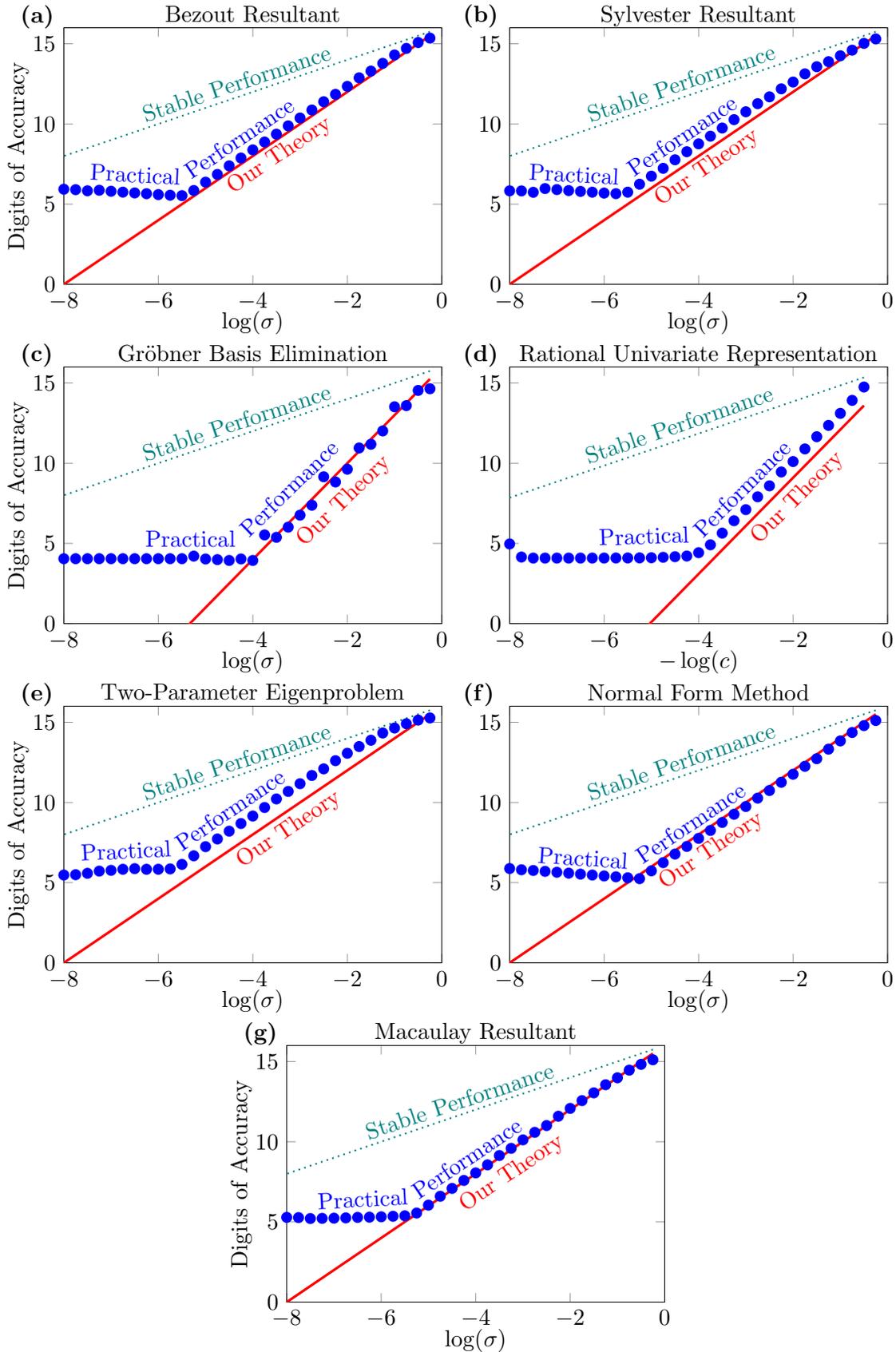

\cref{fig:devex} demonstrates the instability of bivariate algebraic rootfinders. Far from achieving any global stability, the most popular algorithms are not even stable for finding a single root at $(\frac{1}{3},\frac{1}{3})$, from simple systems generated by two quadratic polynomials.\footnote{For hidden-variable resultants, we implement the methods in \cite{nakatsukasa2015bezout, noferini2016instability}. For the two-parameter eigenproblem, we use the code from \cite{plestenjak2024biroots}.  For normal form methods, we implement the null space method of \cite{telen2018normalform, telen2018normalform2, mourrain2021normalform}. We generate orthogonal matrices randomly in MATLAB R2023a,  setting the seed for the random number generator equal to $1$. All the data is the median over $1000$ trials for each value of $\sigma$ or $c$.}

The first analysis of the instability of algebraic rootfinders appears in \cite{nakatsukasa2015bezout,noferini2016instability}, which consider the Sylvester and Cayley/Bezout hidden-variable resultant methods.  In particular, the authors study the following ``devastating'' example.
\begin{ex} \label{ex:dev}
    Let $Q$ be a $d \times d$ orthogonal matrix, $\sigma>0$, and consider \cref{eq:polysystem} with
    $$
    p_i(x_1,\ldots,x_d) = x_i^2 + \sigma \sum_{j=1}^d q_{ij} x_j, \quad 1 \leq i \leq d,
    $$
    where $q_{ij}$ is the $(i,j)$ entry of $Q$. The system has a root at $(0,\ldots,0)$.
\end{ex}
By a conditioning analysis, one should expect to find the root at $(0,\ldots,0)$ to within $\approx \sigma \mathbf{u}$, where $\mathbf{u}$ is the unit roundoff on a computer. However, it has been shown that Sylvester resultants can only achieve $\approx \sigma^{-2} \mathbf{u}$ when $d=2$ and Cayley can only achieve $\approx \sigma^{-d} \mathbf{u}$ \cite{noferini2016instability}. The eigenproblems constructed by these methods can be far more sensitive to perturbations than the original rootfinding problem, which is a hallmark of an unstable algorithm.

Each algebraic rootfinder transforms \cref{eq:polysystem} into one or more GEPs, either directly or by way of a univariate rootfinding problem. For each method, we show that either a constructed eigenproblem or an intermediate univariate rootfinding problem can be ill-conditioned by a factor that depends exponentially on $d$.  
Our results are summarized in \cref{tab:summary}, where $\mathbf{x} = (x_1,\ldots,x_d)$ and $J(\mathbf{x}^*)$ denotes the Jacobian at the root $\mathbf{x}^*$; other notation is explained in the corresponding section, or for hidden-variable resultants in \cite{nakatsukasa2015bezout,noferini2016instability}. The last column of \cref{tab:summary} is the ratio between a subproblem's condition number and that of the original root.   By identifying a source of instability,  we hope practitioners can focus on circumventing it.

\begin{table}[]
    \centering
    \caption{The condition number of subproblems constructed by popular algebraic rootfinders.}
    \resizebox{\columnwidth}{!}{
    \begin{tabular}{cccc}
\hline
& & & \\
    Method & \begin{tabular}{c} Condition Number \\ of Subproblem \end{tabular} & Devastating Example & \begin{tabular}{c} Condition \\ Number Ratio \end{tabular} \\
     &  & & \\
     \hline
    & & & \\
    Cayley Resultant & $\kappa_{\text{eig}}(x_i^*) \geq 
    \frac{||V(\mathbf{x}^*)||_2||W(\mathbf{x}^*)||_2}{\lvert \det(J(\mathbf{x}^*)) \rvert}
    $ & $p(\mathbf{x}) = \mathbf{x}^2 + \sigma Q \mathbf{x}$ & $\frac{\kappa_{\text{eig}}}{\kappa_{\text{root}}} \geq \sigma^{-d+1}$ \\
    (See \cite{noferini2016instability}) & & &\\
    & & & \\
    Sylvester Resultant (2D) & $\kappa_{\text{eig}}(x_i^*) \geq 
    \frac{||v(\mathbf{x}^*)||_2||w(\mathbf{x}^*)||_2}{ \lvert \det(J(\mathbf{x}^*)) \rvert}
    $ & $p(\mathbf{x}) = \mathbf{x}^2 + \sigma Q \mathbf{x}$ & $\frac{\kappa_{\text{eig}}}{\kappa_{\text{root}}} \geq \sigma^{-1}$ \\
    (See \cite{noferini2016instability})  & & &\\
    & & & \\
    Gr\"obner Basis Elimination & (See \cref{sec:GB}) & $p_i(\mathbf{x}) = x_i^2 - \sigma x_{i+1}^2 $ & $\frac{\kappa_{\text{uni}}}{\kappa_{\text{root}}} = \sigma^{-2^d+2}$ \\
    (See \cref{sec:GB}) &  & &\\
    & & & \\
    Rational Univariate & (See \cref{sec:RUR}) & $p(\mathbf{x}) = A \mathbf{x}^2 -\frac{A}{c^2d} \mathbf{1}$ & $\frac{\kappa_{\text{uni}}}{\kappa_{\text{root}}} \geq \frac{||A||_2}{\sqrt{d}}\left( \frac{c}{2} \right)^{2^d-2}$ \\
    (See \cref{sec:RUR}) & & & \\
     & & & \\
    Multiparameter Eigenvalue & $
    \kappa_{\text{eig}}(x_i^*) \geq \frac{ \prod_{k=1}^d \prod_{j=1}^{n_k-1} \sigma_j^{(k)}(\mathbf{x}^*)}{ \lvert \det(J(\mathbf{x}^*)) \rvert}
    $ & $p(\mathbf{x}) = \mathbf{x}^2 + \sigma P \mathbf{x}$ &  $\frac{\kappa_{\text{eig}}}{\kappa_{\text{root}}} \geq \sigma^{-d+1}$\\
    (See \cref{sec:MEP}) & & & \\
    & & &\\
    Normal Form  & $\kappa_{\text{eig}}(x_i^*) \geq
    \frac{||[\det(Q)]_{\mathcal{B}}||_2||\mathcal{B}(\mathbf{x}^*)||_2}{\lvert \det(J(\mathbf{x}^*)) \rvert}
    $ & $p(\mathbf{x})  = \mathbf{x}^2 + \sigma P \mathbf{x}$ & $\frac{\kappa_{\text{eig}}}{\kappa_{\text{root}}} \geq \sigma^{-d+1}$\\
     (See \cref{sec:MS}) & & & \\
     & & & \\
    Macaulay Resultant  & $\kappa_{\text{eig}}(\lambda^*) \geq
    \frac{||[\det(Q)]_{\mathcal{B}}||_2||V(\mathbf{x}^*)||_2}{\lvert \det(J(\mathbf{x}^*)) h(\mathbf{x}^*) \rvert}
    $ & $p(\mathbf{x})  = \mathbf{x}^2 + \sigma P \mathbf{x}$ &  $\frac{\kappa_{\text{eig}}}{\kappa_{\text{root}}} \geq \sigma^{-d+1}$\\
     (See \cref{sec:Mac}) & & & \\
    \hline
\end{tabular}}
    \label{tab:summary}
\end{table}




\section{Background}

We begin with eigenproblems, conditioning, and algebraic geometry. 

\subsection{Generalized Eigenproblems and Matrix Polynomials} Algebraic rootfinders convert \cref{eq:polysystem} into one or more GEPs. A GEP is given by $A \mathbf{x} = \lambda B \mathbf{x}$, where $A,B \in \mathbb{C}^{n \times n}$, and one seeks to find all the eigenvalue-eigenvector pairs $(\lambda,\mathbf{x})$ that satisfy $A \mathbf{x} = \lambda B \mathbf{x}$.  When $B$ is the identity matrix, a GEP simplifies to the standard eigenproblem $A \mathbf{x} = \lambda \mathbf{x}$.

Note that $A \mathbf{x} = \lambda B \mathbf{x}$ if and only if $(A-\lambda B) \mathbf{x} = 0$, so the eigenvalues are the values of $\lambda$ for which $\det(A-\lambda B) = 0$. A nonzero vector $\mathbf{x}$ satisfying $(A-\lambda B) \mathbf{x} = 0$ is known as a right eigenvector, while a nonzero vector $\mathbf{y}$ satisfying $\mathbf{y}^{\top}(A-\lambda B) = 0$, where $\mathbf{y}^{\top}$ denotes the transpose of the vector $\mathbf{y}$, is known as a left eigenvector.  

The form $A-\lambda B$ naturally generalizes in $d$ variables to expressions of the form $V_0 + \sum_{i=1}^d x_i V_i$, where $V_i \in \mathbb{C}^{n \times n}$ for some integer $n$; these are known as linear matrix polynomials, and they form the basis for multiparameter eigenproblems (see \cref{sec:MEP}).

\subsection{Condition Numbers} \label{subsec:cond}
For a function $g$, the normwise absolute condition number is a measurement of how much the output changes for small changes in input around $x^*$, i.e.,
$$
\kappa(x^*) = \lim_{\epsilon \to 0} \sup_{||\delta x||_a < \epsilon} \frac{||g(x^*+\delta x) - g(x^*)||_b}{||\delta x||_a},
$$
where $||\cdot||_a$ and $||\cdot||_b$ denote norms on the input and output spaces. The condition number is unique once a norm is chosen for both the input and output, and tells you how much a solution can be perturbed given a perturbation of the input.   A stable algorithm should compute a solution with error at most $\approx \kappa(x^*) \mathbf{u}$,  where $\mathbf{u}$ is the unit roundoff or machine epsilon. 

The absolute condition number of a root $\mathbf{x}^* = (x_1^*,\ldots,x_d^*)$ of \cref{eq:polysystem} with respect to the spectral norm is \cite{nakatsukasa2015bezout}
\begin{equation} \label{eq:Jacobian}
\kappa_{\text{root}}(\mathbf{x}^*) = ||(J(\mathbf{x}^*))^{-1}||_2, \quad
J(\mathbf{x}^*) =\begin{pmatrix}
    \frac{\partial p_1}{\partial x_1}(\mathbf{x}^*) & \cdots & \frac{\partial p_1}{\partial x_d}(\mathbf{x}^*)\\
    \vdots & \ddots & \vdots \\
    \frac{\partial p_d}{\partial x_1}(\mathbf{x}^*) & \cdots & \frac{\partial p_d}{\partial x_d}(\mathbf{x}^*)
\end{pmatrix}.
\end{equation}
The matrix in \cref{eq:Jacobian} is the Jacobian matrix of \cref{eq:polysystem}, which specializes in the univariate case to $\kappa_{\text{uni}}(x^*) = \lvert (p_1'(x^*))^{-1} \rvert$. For Gr\"obner basis elimination (see \cref{sec:GB}) and the rational univariate representation (see \cref{sec:RUR}), we use this condition number to analyze the instability introduced by an intermediate univariate rootfinding problem.

Algebraic rootfinders convert~\cref{eq:polysystem} into an eigenproblem. For multiparameter eigenproblems (see \cref{sec:MEP}), normal form methods (see \cref{sec:MS}), and Macaulay resultants (see \cref{sec:Mac}), we are interested in the condition number of the constructed eigenproblem.  We define the normwise absolute condition number for $A \mathbf{x} = \lambda B \mathbf{x}$ at eigenvalue $\lambda^*$ as
$$
\kappa_{\text{eig}}(\lambda^*) \hspace{-.1cm} = \hspace{-.1cm} \limsup_{\epsilon \to 0} \hspace{-.1cm} \bigg\{ \hspace{-.1cm} \frac{||\Delta \lambda||_2}{\epsilon} \hspace{-.1cm} : \hspace{-.1cm}  \left((A \hspace{-.1cm} +\hspace{-.1cm} \Delta A) \hspace{-.1cm} - \hspace{-.1cm} (\lambda^*+\Delta \lambda)(B\hspace{-.1cm} +\hspace{-.1cm}\Delta B)\right) (\mathbf{x}+\Delta \mathbf{x}) = 0, ||\Delta A||_2, ||\Delta B||_2 \leq \epsilon  \hspace{-.1cm} \bigg\}.
$$
A similar definition is given in \cite{noferini2016instability}. It is immediate from  \cite{tisseur1998conditioning} that
\begin{equation} \label{eq:geneigconditioning}
\kappa_{\text{eig}}(\lambda^*) = \frac{||\mathbf{y}||_2||\mathbf{x}||_2}{|\mathbf{y}^{\top}B\mathbf{x}|} (1 + |\lambda^*|),
\end{equation}
where $\mathbf{y}$ and $\mathbf{x}$ are the left and right eigenvectors for the eigenvalue $\lambda^*$. In this formulation, the dependence of the condition number on $A$ is encapsulated in the left and right eigenvectors.

A stable rootfinder does not construct any subproblem that is more ill-conditioned than the original rootfinding problem. Unfortunately, we find that every algebraic rootfinder constructs either a univariate rootfinding problem or a GEP with a condition number worse than the original system's by a factor that depends exponentially on $d$.


\subsection{Zero-Dimensional Ideals and Varieties} The polynomial system of $d$ polynomials in $d$ variables in \cref{eq:polysystem} generates a polynomial ideal $\langle p_1,\ldots,p_d \rangle = \{f_1p_1+\ldots+f_dp_d : f_i \in \mathbb{C}[x_1,\ldots,x_d] \}$. The system, or equivalently the ideal, is zero-dimensional if the variety $\mathcal{V}(p_1,\ldots,p_d) = \{(x_1,\ldots,x_d) \in \mathbb{C}^d: p_i(x_1,\ldots,x_d) = 0, 1 \leq i \leq d\}$ consists of finitely many points. A root of a polynomial system is simple if the Jacobian $J(\mathbf{x}^*)$ is invertible. Suppose all the roots of \cref{eq:polysystem} are simple. In that case, being zero-dimensional is equivalent to the quotient $\mathbb{C}[x_1,\ldots,x_d]/ \langle p_1,\ldots,p_d \rangle$ being a finite-dimensional vector space, whose dimension equals the number of roots. Throughout, our polynomial systems have only simple roots.

The equivalence in the last paragraph can be illustrated through the Lagrange interpolant basis for $\mathbb{C}[x_1,\ldots,x_d]/ \langle p_1,\ldots,p_d \rangle$, which is important to later analysis. We call a polynomial $q_{\mathbf{x}}$ a Lagrange interpolant for the system  at $\mathbf{x}$ if $q_{\mathbf{x}}(\mathbf{x}) \neq 0$ and $q_{\mathbf{x}}(\mathbf{x}') = 0$ for all $\mathbf{x}' \in \mathcal{V}(p_1,\ldots,p_d)$, whenever $\mathbf{x}' \neq \mathbf{x}$. If a system is zero-dimensional with simple roots, then the Lagrange interpolants with $q_{\mathbf{x}}(\mathbf{x}) = 1$ are a basis for $\mathbb{C}[x_1,\ldots,x_d]/ \langle p_1,\ldots,p_d \rangle$.

The radical of a polynomial ideal is denoted by $\langle p_1,\ldots,p_d \rangle$ and defined as $\sqrt{\langle p_1,\ldots,p_d \rangle} = \{ f \in \mathbb{C}[x_1,\ldots,x_d] : f^m \in \langle p_1,\ldots,p_d \rangle, m \in \mathbb{Z}_{\geq 0} \}.$ An ideal is radical if $\langle p_1,\ldots,p_d \rangle = \sqrt{\langle p_1,\ldots,p_d \rangle}$. If $f^m \in \langle p_1,\ldots,p_d \rangle$ then $f^m = f = 0$ in $\mathbb{C}[x_1,\ldots,x_d]/ \langle p_1,\ldots,p_d \rangle$, so $f \in \langle p_1,\ldots,p_d \rangle$. Thus, a zero-dimensional polynomial system with simple roots generates a radical ideal, so $\langle p_1,\ldots,p_d \rangle$ is radical throughout.

In addition, we assume that the system in \cref{eq:polysystem} has no roots at infinity; this is generic, and any system with roots at infinity can be transformed via a random linear change of variables into one with all finite roots with probability one.

\section{Gr\"obner Basis Elimination} \label{sec:GB}

 A Gr\"obner basis is a specific type of generating set for a polynomial ideal that allows one to deduce many important properties of the ideal. They are a popular technique for multivariate rootfinding \cite[Chapt. 2]{cox2005ag}. For concreteness, consider solving \cref{eq:polysystem} using a Gr\"obner basis with respect to the lexicographical order. Since \cref{eq:polysystem} has a zero-dimensional variety, the Gr\"obner basis contains a univariate polynomial $g \in \langle p_1,\ldots,p_d \rangle$ that generates the elimination ideal $\langle p_1,\ldots,p_d \rangle \cap \mathbb{C}[x_i]$ and as such determines the $x_i$ coordinates of the roots. However, this univariate polynomial can have extremely ill-conditioned roots, as demonstrated in the following example.

\begin{ex} \label{ex:GBdev}
Let $\sigma > 0$ and consider the system in $\cref{eq:polysystem}$ with polynomials
$$
p_i(x_1,\ldots,x_d) = x_i^2 -  \sigma x_{i+1}, \quad 1 \leq i \leq d-1,  \quad p_d(x_1,\ldots,x_d) = x_d^2 - \sigma x_1.
$$
Expanding $p_{i-1}(x_1,\ldots,x_d) \cdot (x_{i-1}^2+\sigma x_i)$ shows that $x_i^2 = \sigma^{-2}x_{i-1}^4$ in the quotient space $\mathbb{C}[x_1,\ldots,x_d]/ \langle p_1,\ldots,p_d \rangle$. Thus $x_{i} = \sigma^{-1}x_{i-1}^2 =\sigma^{-1}(\sigma^{-2}x_{i-2}^4) = \sigma^{-1}(\sigma^{-2}(\sigma^{-2}x_{i-3}^4)^2) = \cdots = \sigma^{-\sum_{k=0}^{d-1} 2^k} x_{i}^{2^d} = \sigma^{-2^d+1}x_{i}^{2^d}$. Let $g = x_{i}^{2^d} - \sigma^{2^d-1} x_i$. We know that $g \in \langle p_1,\ldots,p_d \rangle$ and has $2^d$ distinct roots, so $g$ generates the elimination ideal $\langle p_1,\ldots,p_d \rangle \cap \mathbb{C}[x_i]$. Due to the system's symmetry,  the same univariate problem is generated for any ordering of the variables. 

The root of the polynomial system at $(0,\ldots,0)$ has absolute condition number given by $\kappa_{\text{root}}(0,\ldots,0) = \sigma^{-1}$, but the root of $g$ at $0$ has absolute condition number $\kappa_{\text{uni}}(0) = \lvert g'(0)^{-1} \rvert = \sigma^{-2^d+1}$. Thus, the Gr\"obner basis elimination method has generated a univariate rootfinding problem that is exponentially ill-conditioned relative to the original system, so the method is unstable. This instability is independent of the polynomial basis chosen and the order of the variables, i.e., it can not be avoided by representing $g$ in a non-monomial basis or by changing the monomial order.
\end{ex}

The instability is illustrated in two variables in \cref{fig:devex} \textbf{(c)} for varying values of $\sigma$, and in \cref{fig:devexhighdimGB} for $\sigma = \frac{1}{2}$ and varying values of $d$. 

\begin{figure}
  \centering
  \begin{Overpic}{
  \begin{tikzpicture}
    \begin{axis}[
      title={Gr\"obner Basis Elimination},height=2.3in,width=3.1in,xmin =1,xmax=8,ymin=0,ymax=16,
      xlabel={$d$}, ylabel={Digits of Accuracy},
      ]
      \addplot [color=teal,thick,dotted,mark = none] table[col sep=comma,x = {xh}, y ={yhtGB}] {HDdata.dat};
      \addplot [color=red,very thick,mark = none] table[col sep=comma,x = {xh}, y ={yhGB}] {HDdata.dat};
      \addplot [color=blue,very thick,only marks] table[col sep=comma,x = {xh}, y ={S8}] {HDdata.dat};
       \end{axis}
  \end{tikzpicture}}
  \put(42,61){\rotatebox{0}{\textcolor{teal}{Stable Performance}}}
  \put(25,58){\rotatebox{-28}{\textcolor{blue}{Practical}}}
  \put(45,46){\rotatebox{-66}{\textcolor{blue}{Performance}}}
  \put(53,50){\rotatebox{-50}{\textcolor{red}{Our}}}
  \put(59,42){\rotatebox{-65}{\textcolor{red}{Theory}}}
  \end{Overpic}
   \label{fig:devexhighdimGB}
  
   \vspace{-.6cm}
   
   \caption{Performance of Gr\"obner basis elimination on \cref{ex:GBdev} for $d \geq 2$ and $\sigma = \frac{1}{2}$. We plot the practical performance against the theoretical performance of a stable algorithm and the prediction given by the analysis of \cref{ex:GBdev}. The deviation of the practical performance from our theory is explained by the extreme proximity of the roots for large values of $d$.}
\end{figure}
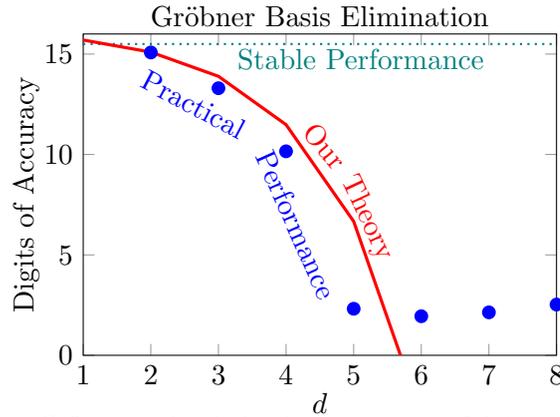


\section{Rational Univariate Representation} \label{sec:RUR}
The rational univariate representation projects a multivariate rootfinding problem to a univariate one \cite{rouillier1999rur}. A projection is determined by a polynomial $t \in \mathbb{C}[x_1,\ldots,x_d]$ such that $t(\mathbf{x}) \neq t(\mathbf{x}')$ for distinct roots $\mathbf{x}$ and $\mathbf{x}'$ of \cref{eq:polysystem}; we call this a separating polynomial. Given a separating polynomial, one finds the roots of
$
f(x) = \prod_{i=1}^r (x-t(\mathbf{x}_i)),
$
where $r$ is the number of roots of the system. Unfortunately, we find that this univariate problem can be highly ill-conditioned. 

In \cite{rouillier1999rur}, it is proved that there always exists a linear separating polynomial. Thus, a separating polynomial has the form
$
t(x_1,\ldots,x_d) = u_1x_1 + \cdots + u_dx_d,
$
 where $u_i \in \mathbb{C}$. In addition, a scale assumption is necessary to obtain meaningful estimates for the absolute condition number. We make the particular choice $\sum_{i = 1}^d \lvert u_i \rvert^2 \leq 1$.

\subsection{A Devastating Example for the Rational Univariate Representation}

We focus on the following example for this section.

\begin{ex}\label{ex:Hypercube}
For any invertible matrix $A \in \mathbb{C}^{d \times d}$ and $c > 0$, consider
$$
p_i(x_1,\ldots,x_d) = 
\sum_{j = 1}^d a_{i j} \left( x_j^2 - \frac{1}{c^2d} \right), \quad 1 \leq i \leq d,
$$
where $a_{ij}$ is the $(i,j)$ entry of $A$. This system has $2^d$ roots at $(\pm \frac{1}{c \sqrt{d}},\cdots, \pm \frac{1}{c \sqrt{d}})$.
The Jacobian at 
$\mathbf{x}^* = (\frac{1}{c \sqrt{d}},\cdots, \frac{1}{c \sqrt{d}})$ is
$
J(\mathbf{x}^*) = \frac{2}{c \sqrt{d}} A,
$
so
the absolute condition number of $\mathbf{x}^*$ is
$
\kappa_{\text{root}}(\mathbf{x}^*) = \frac{c \sqrt{d}}{2||A||_2}.
$
We prove that for any bounded linear projection of this system, the root $x^* = t(\mathbf{x}^*)$ corresponding to $\mathbf{x}^*$ has a condition number that is exponentially greater than $\kappa_{\text{root}}(\mathbf{x}^*)$.
\end{ex}

\begin{theorem}
Consider solving the system in \cref{ex:Hypercube} via the rational univariate representation using a linear projection $t(x_1,\ldots,x_d) = u_1x_1 + \cdots + u_dx_d$, with $\sum_{i=1}^d \lvert u_i \rvert^2 \leq 1$, and let $f(x) = \prod_{i=1}^r (x-t(\mathbf{x}_i))$. Then, the root $x^* = t(\mathbf{x}^*)$ of $f$ corresponding to $\mathbf{x}^*$ has condition number $\kappa_{\text{uni}}(x^*) \geq \left( \frac{c}{2} \right)^{2^d-1}$.
\end{theorem}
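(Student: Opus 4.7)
The plan is to exploit the fact that the univariate condition number satisfies $\kappa_{\text{uni}}(x^*) = 1/|f'(x^*)|$, and then to carefully bound $|f'(x^*)|$ from above using the special combinatorial structure of the roots of the system in \cref{ex:Hypercube}.

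First, since $f(x) = \prod_{i=1}^r(x - t(\mathbf{x}_i))$ and $x^* = t(\mathbf{x}^*)$ is a simple root, differentiating and evaluating gives
\[
f'(x^*) = \prod_{\mathbf{x}_i \neq \mathbf{x}^*} \bigl(t(\mathbf{x}^*) - t(\mathbf{x}_i)\bigr).
\]
The $2^d$ roots of the system are precisely the points $(\pm \frac{1}{c\sqrt{d}},\ldots,\pm \frac{1}{c\sqrt{d}})$, so the roots other than $\mathbf{x}^* = (\frac{1}{c\sqrt{d}},\ldots,\frac{1}{c\sqrt{d}})$ are indexed by the nonempty subsets $S \subseteq \{1,\ldots,d\}$, where $S$ records the coordinates in which $\mathbf{x}_i$ disagrees with $\mathbf{x}^*$. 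For such a root $\mathbf{x}_S$, the linearity of $t$ gives
\[
t(\mathbf{x}^*) - t(\mathbf{x}_S) \;=\; \sum_{j \in S} u_j\!\left(\tfrac{1}{c\sqrt{d}} - \bigl(-\tfrac{1}{c\sqrt{d}}\bigr)\right) \;=\; \tfrac{2}{c\sqrt{d}}\sum_{j \in S} u_j.
\]

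Next, I would take absolute values and factor out the scalar prefactor, yielding
\[
|f'(x^*)| = \left(\tfrac{2}{c\sqrt{d}}\right)^{\!2^d-1} \prod_{\varnothing \neq S \subseteq \{1,\ldots,d\}} \left|\sum_{j \in S} u_j\right|.
\]
To control the product, I apply Cauchy--Schwarz to each factor: $\bigl|\sum_{j \in S} u_j\bigr|^2 \leq |S| \sum_{j \in S}|u_j|^2 \leq |S|\cdot 1 \leq d$, where the scale normalization $\sum_i |u_i|^2 \leq 1$ is used in the second inequality. Hence each factor is bounded by $\sqrt{d}$, and multiplying over the $2^d - 1$ nonempty subsets gives
\[
|f'(x^*)| \;\leq\; \left(\tfrac{2}{c\sqrt{d}}\right)^{\!2^d-1}\! d^{(2^d-1)/2} \;=\; \left(\tfrac{2}{c}\right)^{\!2^d-1}.
\]
Inverting then yields $\kappa_{\text{uni}}(x^*) = |f'(x^*)|^{-1} \geq (c/2)^{2^d-1}$, which is the claimed bound.

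There is no real obstacle in this argument; the only subtle point is recognizing that the worst-case projection direction is constrained uniformly by $\sum_j |u_j|^2 \leq 1$, which is precisely what makes the Cauchy--Schwarz estimate independent of the specific choice of $t$, and hence makes the bound hold simultaneously for every admissible linear separating polynomial. The combinatorial observation that the pairwise differences of roots are indexed by nonempty subsets of $\{1,\ldots,d\}$, giving exactly $2^d - 1$ factors, is the structural feature of \cref{ex:Hypercube} that produces the exponential blow-up.
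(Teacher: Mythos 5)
Your proof is correct, and it follows the paper's overall structure---same parametrization of the $2^d$ roots by subsets $S\subseteq[d]$, same differentiation of $f$, same factoring out of $(2/(c\sqrt{d}))^{2^d-1}$---but you diverge at the key estimate. Where the paper invokes a dedicated technical result (\cref{lem:RURbound}, proved in \cref{app:RURbound}) showing that $\prod_{\emptyset\neq S\subseteq[d]}\sum_{i\in S}|u_i|$ is maximized at the uniform point $(d^{-1/2},\ldots,d^{-1/2})$ via a symmetrization/averaging argument, you instead bound each factor separately by Cauchy--Schwarz: $\bigl|\sum_{j\in S}u_j\bigr|^2 \le |S|\sum_{j\in S}|u_j|^2\le d$, so each factor is at most $\sqrt{d}$, and the product of the $2^d-1$ factors is at most $(\sqrt{d})^{2^d-1}$. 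Both routes land on the same bound $(\sqrt{d})^{2^d-1}$, but yours is more elementary and self-contained: it removes the need for the appendix lemma entirely and makes transparent exactly how the normalization $\sum_i|u_i|^2\le 1$ is used (uniformly, per factor). What the paper's lemma buys in exchange for its extra work is sharper information---it identifies the exact maximizer and gives the tighter value $\prod_{m=1}^d(m/\sqrt{d})^{\binom{d}{m}}$ before relaxing to $(\sqrt{d})^{2^d-1}$---but none of that sharpness is needed for the stated theorem, so your shortcut is a genuine simplification.
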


\begin{proof}
    Suppose we have a projection $t(x_1,\ldots,x_d)=u_1x_1 + \cdots + u_d x_d$ represented by a vector $\mathbf{u} = (u_1,\ldots,u_d)$. The one-dimensional rootfinding problem is
    $$
    f(x) = \prod_{S \subseteq [d]} \left( x - \frac{1}{c \sqrt{d}} \sum_{i=1}^d (-1)^{[i \in S]} u_i \right),
    $$
    with $[d] = \{1,\ldots,d\}$. Differentiating $f(x)$ shows that the condition number of the root $x^* = \frac{1}{c \sqrt{d}} \sum_{i=1}^d u_i$ satisfies
    \begin{align*}
    \kappa_{\text{uni}}(x^*)^{-1} &= \left\lvert \prod_{\substack{S \subseteq [d] \\ S \neq \emptyset}} \left( x^* - \frac{1}{c \sqrt{d}} \sum_{i=1}^d (-1)^{[i \in S]} u_i \right) \right\rvert
    = \left\lvert\prod_{\substack{S \subseteq [d] \\ S \neq \emptyset}} \sum_{i \in S}\frac{2u_i}{c\sqrt{d}} \right\rvert
    = \left(\frac{2}{c\sqrt{d}} \right)^{2^d-1} \left\lvert\prod_{\substack{S \subseteq [d] \\ S \neq \emptyset}} \sum_{i \in S} u_i \right\rvert \\
    &\leq \left(\frac{2}{c\sqrt{d}} \right)^{2^d-1} \prod_{\substack{S \subseteq [d] \\ S \neq \emptyset}} \sum_{i \in S} \left\lvert u_i \right\rvert 
    \leq \left(\frac{2}{c\sqrt{d}} \right)^{2^d-1}\left(\sqrt{d} \right)^{2^d-1} = \left(\frac{2}{c} \right)^{2^d-1},
    \end{align*}
    where the last inequality follows from \cref{lem:RURbound}. Thus $\kappa_{\text{uni}}(x^*) \geq \left( \frac{c}{2} \right)^{2^d-1}$.
\end{proof}

Thus, for the system in \cref{ex:Hypercube}, the root $\mathbf{x}^* = (\frac{1}{c \sqrt{d}},\cdots, \frac{1}{c \sqrt{d}})$ of the original system has $\kappa_{\text{root}}(\mathbf{x}^*) = \frac{c \sqrt{d}}{2||A||_2}$ but, if we try to solve this system using the rational univariate representation, the condition number of the corresponding root $x^* = t(\mathbf{x}^*)$ has $\kappa_{\text{uni}}(x^*) \geq \left( \frac{c}{2} \right)^{2^d-1}$. When $c>2$, this suggests a loss in accuracy due to the univariate rootfinding problem that is exponential in $d$, which aligns with practical experiments when $d=2$ (see \cref{fig:devex} \textbf{(d)}). This instability is basis independent and can not be avoided by representing $f$ in a different polynomial basis.

Choosing a nonlinear separating polynomial is also possible, which complicates the conditioning analysis. For each root $\mathbf{x}$, there exists a Lagrange interpolant $q_{\mathbf{x}}$ such that $q_{\mathbf{x}}(\mathbf{x}) = 1$ and $q_{\mathbf{x}}(\mathbf{x}') = 0$ for all roots $\mathbf{x}' \neq \mathbf{x}$. The degree of these interpolants can be bounded above (see \cref{sec:MS}). Thus, if the degree of $t$ is allowed to be large enough, then $t$ can theoretically be chosen so that $f$ is any univariate polynomial with $r$ roots, so, in particular, the roots of $f$ could be as well-conditioned as the original roots. However, we are unaware of a method for choosing such a clever separating polynomial. We suspect that finding such a separating polynomial is essentially as hard as finding the roots themselves, and as such, having a linear separating polynomial is a reasonable assumption. Still, the possibility of a more creative choice of separating polynomial is a potential way around this point of instability.

\section{Multiparameter Eigenproblem Solved by Operator Determinants} \label{sec:MEP}
Another method for solving a multidimensional rootfinding problem is to convert it into a multiparameter eigenproblem \cite{plestenjak2016roots,boralevi2017uniform,plestenjak2017minimal}. Given the polynomial system in \cref{eq:polysystem}, we first construct a determinantal representation for each $p_i$, which is a linear matrix polynomial $W_i(x_1,\ldots,x_d)$ such that $\det(W_i) = p_i$. This gives a multiparameter eigenproblem of the form:
\begin{equation} \label{eq:MEP}
W_i(\mathbf{x}) \mathbf{v}_i = \left(V_{i0} - \sum_{j=1}^d x_j V_{ij} \right) \mathbf{v}_i = 0, \quad 1 \leq i \leq d,
\end{equation}
with $V_{ij} \in \mathbb{C}^{n_i \times n_i}$ for integers $n_1,\ldots,n_d$. A solution of the multiparameter eigenproblem consists of an eigenvalue $(x_1^*,\ldots,x_d^*)$ and eigenvectors $\mathbf{v}_1,\ldots,\mathbf{v}_d$ that solve \cref{eq:MEP}. A $d$-tuple $(x_1^*,\ldots,x_d^*)$ is an eigenvalue of this multiparameter eigenproblem if and only if it is a root of the system in \cref{eq:polysystem}. The usual approach to solve the multiparameter eigenproblem is via operator determinants \cite{atkinson1972multieig}. One constructs
\[
\Delta_0 =
\begin{vmatrix}
    V_{11}& V_{12} & \cdots & V_{1d} \\
    V_{21}& V_{22} & \cdots & V_{2d} \\
    \vdots & \vdots & \ddots & \vdots \\
    V_{d1} & V_{d2} & \cdots & V_{dd}
\end{vmatrix},
\qquad 
\Delta_i = 
\begin{vmatrix}
    V_{11}& \cdots & V_{1,i-1} & V_{10} & V_{1,i+1} & \cdots & V_{1d} \\
    V_{21}& \cdots & V_{2,i-1} & V_{20} & V_{2,i+1} & \cdots & V_{2d} \\
    \vdots & \ddots & \vdots & \vdots & \vdots & \ddots & \vdots \\
    V_{d1} & \cdots & V_{d,i-1} & V_{d0} & V_{d,i+1} & \cdots & V_{dd}
\end{vmatrix},
\]
where the vertical bars denote taking the block determinant with multiplication replaced by Kronecker products. Then, one solves the following GEPs:
\begin{equation} \label{eq:opdetgeneig}
(\Delta_i -x_i \Delta_0)\mathbf{z}_i = 0, \quad 1 \leq i \leq d.
\end{equation}

\subsection{Conditioning of the Operator Determinants Method}

Unfortunately, we show that the GEPs in \cref{eq:opdetgeneig} can be exponentially ill-conditioned. Similarly to \cite{hochstenbach2003conditioning}, we relate the conditioning of this problem to the matrix
$$
B_0 = \begin{pmatrix}
    \mathbf{u}_1^{\top} V_{11} \mathbf{v}_1 & \mathbf{u}_1^{\top} V_{12} \mathbf{v}_1 & \cdots & \mathbf{u}_1^{\top} V_{1d} \mathbf{v}_1 \\
    \mathbf{u}_2^{\top} V_{21} \mathbf{v}_2 & \mathbf{u}_2^{\top} V_{22} \mathbf{v}_2 & \cdots & \mathbf{u}_2^{\top} V_{2d} \mathbf{v}_2 \\
    \vdots & \vdots & \ddots & \vdots \\
    \mathbf{u}_d^{\top} V_{d1} \mathbf{v}_d & \mathbf{u}_d^{\top} V_{d2} \mathbf{v}_d & \cdots & \mathbf{u}_d^{\top} V_{dd} \mathbf{v}_d
\end{pmatrix},
$$
with $\mathbf{u}_i,\mathbf{v}_i$ the left and right eigenvectors associated with an eigenvalue $(x_1^*,\ldots,x_d^*)$. We assume the eigenvectors are all normalized so that $||\mathbf{u}_i||_2 = ||\mathbf{v}_i||_2 = 1$. We first prove the following extension of \cite[Proposition 13]{hochstenbach2003conditioning}, which relates $B_0$ to the Jacobian of \cref{eq:polysystem}.

\begin{proposition} \label{prop:Bormultieigcond}
Let $W = (W_1,\ldots,W_d)$ be a multiparameter eigenproblem with simple eigenvalue $\mathbf{x}^* = (x^*_1,\ldots,x^*_d)$, where $W_i(\mathbf{x}) \in \mathbb{C}^{n_i \times n_i}$. Then
\begin{equation} \label{eq:B0Jacobian}
B_0 = \begin{pmatrix}
    \pm \prod_{j=1}^{n_1-1} \sigma_j^{(1)}(\mathbf{x}^*) & 0 & 0\\ 
    0 & \ddots & 0 \\
    0 & 0 & \pm \prod_{j=1}^{n_d-1}\sigma_j^{(d)}(\mathbf{x}^*)
\end{pmatrix}^{-1}
\begin{pmatrix}
    \frac{\partial p_1}{\partial x_1}(\mathbf{x}^*) & \cdots & \frac{\partial p_1}{\partial x_d}(\mathbf{x}^*)\\
    \vdots & \ddots & \vdots \\
    \frac{\partial p_d}{\partial x_1}(\mathbf{x}^*) & \cdots & \frac{\partial p_d}{\partial x_d}(\mathbf{x}^*)
\end{pmatrix},
\end{equation}
with $p_i = \det(W_i)$ and $\{\sigma_j^{(i)} \}_{j=1}^{n_i-1}$ the nonzero singular values of $W_i(\mathbf{x}^*)$.
\end{proposition}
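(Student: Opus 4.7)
The plan is to apply Jacobi's formula for the derivative of a determinant to each polynomial $p_i=\det(W_i)$, giving
\[
\frac{\partial p_i}{\partial x_j}(\mathbf{x}^*) = \operatorname{tr}\!\left(\operatorname{adj}(W_i(\mathbf{x}^*))\,\frac{\partial W_i}{\partial x_j}(\mathbf{x}^*)\right) = -\operatorname{tr}\!\left(\operatorname{adj}(W_i(\mathbf{x}^*))\,V_{ij}\right),
\]
since $W_i(\mathbf{x}) = V_{i0} - \sum_k x_k V_{ik}$. Because the asserted identity is a row-by-row equality, I would work one row $i$ at a time, with the goal of expressing $\operatorname{adj}(W_i(\mathbf{x}^*))$ in terms of $\mathbf{u}_i$, $\mathbf{v}_i$, and the singular values of $W_i(\mathbf{x}^*)$.

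First I would use the simplicity of $\mathbf{x}^*$ to show that $\operatorname{adj}(W_i(\mathbf{x}^*))$ has the rank-one form $c_i\mathbf{v}_i\mathbf{u}_i^{\top}$ for some scalar $c_i\in\mathbb{C}$. Since $\mathbf{x}^*$ is a simple eigenvalue, $W_i(\mathbf{x}^*)$ has rank exactly $n_i-1$, so the adjugate has rank at most one. The identity $W_i(\mathbf{x}^*)\operatorname{adj}(W_i(\mathbf{x}^*)) = \det(W_i(\mathbf{x}^*))I = 0$ forces the column space of the adjugate into $\ker W_i(\mathbf{x}^*)=\operatorname{span}(\mathbf{v}_i)$, and dually its row space into $\operatorname{span}(\mathbf{u}_i^{\top})$, yielding the rank-one form.

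Next I would identify $|c_i|$ from the SVD. Writing $W_i(\mathbf{x}^*)=U_i\Sigma_i V_i^*$ with $\Sigma_i=\operatorname{diag}(\sigma_1^{(i)},\ldots,\sigma_{n_i-1}^{(i)},0)$, multiplicativity of the adjugate together with $\operatorname{adj}(\Sigma_i)=\prod_{j=1}^{n_i-1}\sigma_j^{(i)}\,\mathbf{e}_{n_i}\mathbf{e}_{n_i}^{\top}$ gives $\|\operatorname{adj}(W_i(\mathbf{x}^*))\|_2=\prod_{j=1}^{n_i-1}\sigma_j^{(i)}(\mathbf{x}^*)$. Comparing with $c_i\mathbf{v}_i\mathbf{u}_i^{\top}$ and using $\|\mathbf{u}_i\|_2=\|\mathbf{v}_i\|_2=1$ gives $|c_i|=\prod_{j=1}^{n_i-1}\sigma_j^{(i)}(\mathbf{x}^*)$. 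Substituting back into Jacobi's formula yields $\frac{\partial p_i}{\partial x_j}(\mathbf{x}^*)=-c_i\mathbf{u}_i^{\top}V_{ij}\mathbf{v}_i=-c_i(B_0)_{ij}$, which reassembled across rows is exactly the factorization $J(\mathbf{x}^*)=D\cdot B_0$ with $D_{ii}=-c_i=\pm\prod_{j=1}^{n_i-1}\sigma_j^{(i)}(\mathbf{x}^*)$. Inverting $D$ gives the identity claimed in the proposition.

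The main obstacle is the bookkeeping around phases and signs: $\mathbf{u}_i$ and $\mathbf{v}_i$ are determined only up to a unit-modulus scalar, the SVD introduces $\det(U_i)\overline{\det(V_i)}$ factors when reconstructing $\operatorname{adj}(W_i(\mathbf{x}^*))$ from the singular vectors, and the paper uses transpose rather than conjugate transpose for left eigenvectors. All these ambiguities must collapse into the single $\pm$ recorded on the diagonal; I would absorb them by choosing the unit-modulus normalizations of $\mathbf{u}_i$ and $\mathbf{v}_i$ so that $c_i$ is real, leaving only its overall sign undetermined, which matches the form of the statement.
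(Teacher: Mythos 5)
Your proof is correct and takes a genuinely different route from the paper's. The paper's argument freezes all variables but $x_j$, invokes an analytic SVD-type decomposition of $W_i$ along the resulting real path (a nontrivial existence result that costs a separate appendix, \cref{app:Analytic SVD}), identifies the left/right eigenvectors as the limiting last singular vectors, and differentiates the vanishing (signed) singular value to extract $-(B_0)_{ij}$. Your argument instead applies Jacobi's formula to $p_i=\det W_i$, reduces the task to computing $\operatorname{tr}(\operatorname{adj}(W_i(\mathbf{x}^*))V_{ij})$, uses the rank-one structure of the adjugate of a rank-$(n_i-1)$ matrix to pin down $\operatorname{adj}(W_i(\mathbf{x}^*)) = c_i\,\mathbf{v}_i\mathbf{u}_i^\top$, and then reads off $|c_i|$ from the SVD via the multiplicativity of the adjugate. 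This is more algebraic and entirely avoids the analytic-SVD machinery: everything happens at the single point $\mathbf{x}^*$, with no need to construct analytic paths of unitaries. The price is the phase bookkeeping you flag at the end, but this is exactly the same ambiguity the paper faces — there too, $\det(U(t))\overline{\det(V(t))}$ is only unit modulus rather than $\pm1$ for complex matrices — and neither presentation resolves it beyond absorbing it into the $\pm$; since \cref{thm:multipareig} only uses $|\det B_0|$, this is harmless in both cases. Overall your route is arguably cleaner and would let the paper drop \cref{app:Analytic SVD}.
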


\begin{proof}
    The proof follows the argument in \cite[Proposition 13]{hochstenbach2003conditioning}.  Note that $g(t) = \det(Z(t))$ is an analytic function of $t$,  where $Z(t) = W_i(x^*_1,\cdots,x^*_{j-1},t,x^*_{j+1},\cdots,x^*_d)$, so we can calculate its derivative along any path. We choose $t$ real. Then let $Z(t) = U(t)S(t)V(t)^H$, where $U$ and $V$ are unitary, $V(t)^H$ denotes the conjugate transpose of $V(t)$, $S$ is a real diagonal matrix, and each term on the right-hand side is an analytic function of $t$ for some small interval around $x^*_j$. The decomposition can be permuted so that $S(x^*_j) = \diag\{\sigma_k(W_i(\mathbf{x}^*))\}_{k=1}^{n_i}$. It is a simple extension of \cite[Theorem 1]{bunse1991analyticsvd} that such a decomposition exists (see \cref{app:Analytic SVD}). This is a bit like an analytic SVD, except that the diagonal entries of $S(t)$ may not be ordered.

    Denote the last columns of $U(t)$, $V(t)$ by $\mathbf{u}_{n_i}(t)$, $\mathbf{v}_{n_i}(t)$, and the $(n_i,n_i)$ entry of $S(t)$ by $\sigma_{n_i}(t)$. Then $\sigma_{n_i}(x^*_j) = 0$, $\mathbf{u}_{n_i}(x^*_j)^H = \mathbf{u}_i^{\top}$, $\mathbf{v}_{n_i}(x^*_j) = \mathbf{v}_i$, and $\sigma_{n_i}(t) = \mathbf{u}_{n_i}(t)^{H} Z(t) \mathbf{v}_{n_i}(t)$, so
    $$
    \frac{d \sigma_{n_i}}{dt}(x^*_j) = -\mathbf{u}_i^{\top} V_{ij} \mathbf{v}_i = -(B_0)_{ij}.
    $$
    Then note that $g(t) = \det(Z(t)) =  \mp \prod\sigma_i(t) $, so that
    $$
    \frac{\partial p_i}{\partial x_j}(\mathbf{x}^*) = \frac{dg}{dt} (x^*_j) = \pm \prod_{k=1}^{n_i-1} \sigma_k^{(i)}(\mathbf{x}^*)(B_0)_{ij},
    $$
    and the result follows.
\end{proof}

Note that without the assumption of a simple eigenvalue, the diagonal matrix in \cref{eq:B0Jacobian} is not invertible, but we still have
\[
\begin{pmatrix}
    \pm \prod_{j=1}^{n_1-1} \sigma_j^{(1)}(\mathbf{x}^*) & 0 & 0\\ 
    0 & \ddots & 0 \\
    0 & 0 & \pm \prod_{j=1}^{n_d-1} \sigma_j^{(d)}(\mathbf{x}^*)
\end{pmatrix} B_0 =
\begin{pmatrix}
    \frac{\partial p_1}{\partial x_1}(\mathbf{x}^*) & \cdots & \frac{\partial p_1}{\partial x_d}(\mathbf{x}^*)\\
    \vdots & \ddots & \vdots \\
    \frac{\partial p_d}{\partial x_1}(\mathbf{x}^*) & \cdots & \frac{\partial p_d}{\partial x_d}(\mathbf{x}^*)
\end{pmatrix},
\]
which shows that $\mathbf{x}^*$ is a simple root of \cref{eq:polysystem} only if $\mathbf{x}^*$ is a simple eigenvalue of the corresponding multiparameter eigenproblem. Thus, the system in \cref{eq:polysystem} always produces multiparameter eigenproblems that satisfy the conditions of \cref{prop:Bormultieigcond}, and in particular $W_i(\mathbf{x}^*)$ always has  $n_i-1$ nonzero singular values. 

We can now characterize the conditioning of the GEPs in \cref{eq:opdetgeneig}.

\begin{theorem} \label{thm:multipareig}
Let $p = (p_1,\ldots,p_d)$ be a zero-dimensional polynomial system in \cref{eq:polysystem} with all simple roots and no roots at infinity. Let $W = (W_1,\ldots,W_d)$ be a multiparameter eigenproblem with $\det(W_k) = p_k$. Then, the condition number of a GEP in \cref{eq:opdetgeneig} generated by operator determinants is
$$
\kappa_{\text{eig}}(x_i^*) = \lvert \det(B_0)^{-1} \rvert (1+|x_i^*|) = \frac{ \prod_{k=1}^d \prod_{j=1}^{n_k-1} \sigma_j^{(k)}(\mathbf{x}^*)}{\lvert \det(J(\mathbf{x}^*)) \rvert}(1+|x_i^*|),
$$
where $J$ is the Jacobian of $p$ and $\{\sigma_j^{(k)} \}_{j=1}^{n_k-1}$ are the $n_k-1$ nonzero singular values of $W_k(\mathbf{x}^*)$.
\end{theorem}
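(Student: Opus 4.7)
The plan is to combine the generalized eigenvalue conditioning formula \cref{eq:geneigconditioning} with \cref{prop:Bormultieigcond}. The key intermediate step is to identify the left and right eigenvectors of the GEP $(\Delta_i - x_i \Delta_0)\mathbf{z}_i = 0$ in terms of the eigenvectors $\mathbf{u}_1,\ldots,\mathbf{u}_d$ and $\mathbf{v}_1,\ldots,\mathbf{v}_d$ of the multiparameter eigenproblem \cref{eq:MEP}. Using the Kronecker product structure of the operator determinants $\Delta_0,\Delta_1,\ldots,\Delta_d$ together with multilinearity of the block determinant in its columns, one verifies the classical Atkinson identity that $\mathbf{z} = \mathbf{v}_1 \otimes \cdots \otimes \mathbf{v}_d$ is a right eigenvector of $\Delta_i - x_i^* \Delta_0$ and $\mathbf{y} = \mathbf{u}_1 \otimes \cdots \otimes \mathbf{u}_d$ is a corresponding left eigenvector, for each $i=1,\ldots,d$. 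Since each $\mathbf{u}_k$ and $\mathbf{v}_k$ is of unit norm, the tensor products satisfy $\|\mathbf{y}\|_2 = \|\mathbf{z}\|_2 = 1$.

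Next, I would compute the scalar $\mathbf{y}^{\top}\Delta_0 \mathbf{z}$. Expanding $\Delta_0$ as a block determinant with Kronecker products in place of ordinary products yields a sum over permutations $\pi$ of $[d]$ of terms $\operatorname{sgn}(\pi)\, V_{1,\pi(1)} \otimes \cdots \otimes V_{d,\pi(d)}$. Contracting on the left with $\mathbf{u}_1^{\top} \otimes \cdots \otimes \mathbf{u}_d^{\top}$ and on the right with $\mathbf{v}_1 \otimes \cdots \otimes \mathbf{v}_d$ produces $\operatorname{sgn}(\pi)\prod_{k=1}^d \mathbf{u}_k^{\top} V_{k,\pi(k)} \mathbf{v}_k$, and summing over $\pi$ is precisely the Leibniz expansion of $\det(B_0)$. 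Hence $\mathbf{y}^{\top}\Delta_0 \mathbf{z} = \det(B_0)$. Substituting into \cref{eq:geneigconditioning} gives
$$
\kappa_{\text{eig}}(x_i^*) = \frac{1}{|\det(B_0)|}(1 + |x_i^*|),
$$
which establishes the first equality in the theorem.

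For the second equality, I would take determinants of both sides of the matrix identity \cref{eq:B0Jacobian} from \cref{prop:Bormultieigcond}. Since the prefactor is diagonal with nonzero entries $\pm\prod_{j=1}^{n_k-1}\sigma_j^{(k)}(\mathbf{x}^*)$ for $k=1,\ldots,d$ (nonzero because simple roots correspond to simple eigenvalues, so each $W_k(\mathbf{x}^*)$ has exactly $n_k-1$ nonzero singular values), the determinant of the right-hand side is $\det(J(\mathbf{x}^*))$ divided by $\pm\prod_{k=1}^d \prod_{j=1}^{n_k-1}\sigma_j^{(k)}(\mathbf{x}^*)$. Rearranging and taking absolute values yields
$$
|\det(B_0)| = \frac{|\det(J(\mathbf{x}^*))|}{\prod_{k=1}^d \prod_{j=1}^{n_k-1}\sigma_j^{(k)}(\mathbf{x}^*)},
$$
and substituting this into the expression for $\kappa_{\text{eig}}(x_i^*)$ completes the proof.

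The main obstacle is the careful justification of the tensor-product form of the left and right eigenvectors, and of the identity $\mathbf{y}^{\top}\Delta_0\mathbf{z} = \det(B_0)$. These are folklore in the multiparameter eigenvalue literature but require bookkeeping with block determinants and permutation signs; since \cref{eq:geneigconditioning} uses unconjugated transposes, one can perform the expansion purely algebraically in the entries of the $V_{ij}$, $\mathbf{u}_k$, and $\mathbf{v}_k$, avoiding any confusion between $\mathbf{y}^{H}$ and $\mathbf{y}^{\top}$.
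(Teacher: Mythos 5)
Your proposal is correct and follows essentially the same route as the paper: use the Kronecker-product form of the left and right eigenvectors of the operator-determinant GEP, evaluate $\mathbf{y}^{\top}\Delta_0\mathbf{z}=\det(B_0)$, substitute into \cref{eq:geneigconditioning}, and then take determinants in \cref{prop:Bormultieigcond}. You spell out the permutation-sum expansion of $\Delta_0$ more explicitly than the paper's ``simple calculation,'' but the argument is the same.
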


\begin{proof}
The eigenvectors of \cref{eq:opdetgeneig} are the Kronecker products of the eigenvectors of the original multiparameter eigenproblem, so the condition number of one of the resulting GEPs is
$$
\kappa_{\text{eig}}(x_i^*) = \frac{||\mathbf{u}_1 \otimes \cdots \otimes \mathbf{u}_d ||_2 ||\mathbf{v}_1 \otimes \cdots \otimes \mathbf{v}_d||_2}{ \lvert (\mathbf{u}_1 \otimes \cdots \otimes \mathbf{u}_d)^{\top} \Delta_0 (\mathbf{v}_1 \otimes \cdots \otimes \mathbf{v}_d) \rvert} (1+\lvert x_i^* \rvert),
$$
where $\mathbf{u}_i,\mathbf{v}_i$ are the normalized left and right eigenvectors corresponding to $\mathbf{x}^*$. For any matrices $V_1,\ldots,V_d$ with $V_i \in \mathbb{C}^{n_i \times n_i}$ a simple calculation gives
$$
    (\mathbf{u}_1^{\top} V_1 \mathbf{v}_1)  \cdots (\mathbf{u}_d^{\top} V_d \mathbf{v}_d) = (\mathbf{u}_1 \otimes \cdots \otimes \mathbf{u}_d)^{\top} (V_1 \otimes \cdots \otimes V_d) (\mathbf{v}_1 \otimes \cdots \otimes \mathbf{v}_d).
$$
Thus $(\mathbf{u}_1 \otimes \cdots \otimes \mathbf{u}_d)^{\top} \Delta_0 (\mathbf{v}_1 \otimes \cdots \otimes \mathbf{v}_d) = \det(B_0)$, and the result follows from \cref{prop:Bormultieigcond}.
\end{proof}

Now, we consider an example that demonstrates how the formula for $\kappa_{\text{eig}}(x_i^*)$ in \cref{thm:multipareig} leads to exponential instability.

\begin{ex} \label{ex:MEPdev}
Let $P$ be a $d \times d$ permutation matrix and $\sigma > 0$. For each $i$, there is an integer $j_i$ such that the $(i,j_i)$ entry of $P$ equals $1$. Consider the system in \cref{eq:polysystem} with 
    $$
    p_i(x_1,\ldots,x_d) = x_i^2 + \sigma x_{j_i}, \quad 1 \leq i \leq d.
    $$
The system has a root at $(0,\ldots,0)$ with condition number $\kappa_{\text{root}}(0,\ldots,0) = ||J^{-1}(0,\ldots,0)||_2 = \sigma^{-1}$. We can construct a determinantal representation of this system with
    $$
    W_i(x_1,\ldots,x_d) = \begin{pmatrix}
    1 & 0 \\
    0 & 1
    \end{pmatrix} x_i + \begin{pmatrix}
    0 & \sigma \\
    0 & 0
    \end{pmatrix} x_{j_i}  + \begin{pmatrix}
    0 & 0 \\
    -1 & 0
    \end{pmatrix}, \quad 1 \leq i \leq d.
    $$
    \cref{thm:multipareig} proves that the condition number of the GEP that results from the operator determinants method is $\kappa_{\text{eig}}(0) \geq \sigma^{-d} \prod_{k=1}^d \prod_{j=1}^{n_k-1} \sigma_j(V_{k0}) = \sigma^{-d}$. So, $\kappa_{\text{eig}}$ for the GEP in \cref{eq:opdetgeneig} can be greater than $\kappa_{\text{root}}$ by a factor that depends exponentially on $d$. This agrees with practical experiments (see \cref{fig:devex} \textbf{(e)} and \cref{fig:devexhighdimMEP}). As in \cref{sec:GB} and \cref{sec:RUR}, this instability is basis independent.
\end{ex}

While \cref{thm:multipareig} does not demonstrate that all determinantal representations of this system lead to exponential ill-conditioning, there is no known method that uses the factor $\prod_{k=1}^d \prod_{j=1}^{n_k-1} \sigma_j^{(k)}(\mathbf{x}^*)$ to control the conditioning of the GEP in \cref{eq:opdetgeneig}.  In fact, we believe that any such construction is likely as difficult as finding the roots themselves. 

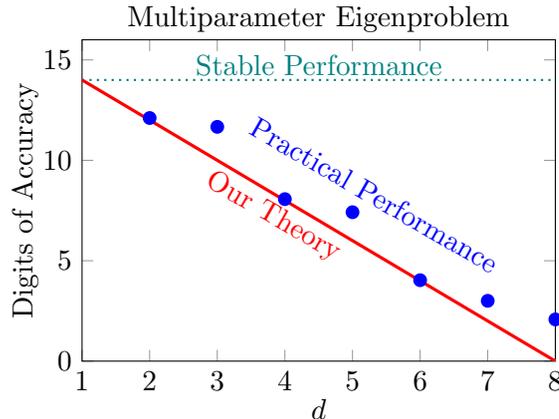
\begin{figure}
  \centering
  \begin{Overpic}{
  \begin{tikzpicture}
    \begin{axis}[
      title={Multiparameter Eigenproblem},height=2.3in,width=3.1in,xmin =1,xmax=8,ymin=0,ymax=16,
      xlabel={$d$}, ylabel={Digits of Accuracy}
      ]
      \addplot [color=teal,thick,dotted,mark = none] table[col sep=comma,x = {xh}, y ={yhtMEP}] {HDdata.dat};
      \addplot [color=red,very thick,mark = none] table[col sep=comma,x = {xh}, y ={yhMEP}] {HDdata.dat};
      \addplot [color=blue,very thick,only marks] table[col sep=comma,x = {xh}, y ={S9}] {HDdata.dat};
       \end{axis}
  \end{tikzpicture}}
  \put(35,61){\rotatebox{0}{\textcolor{teal}{Stable Performance}}}
  \put(44,51){\rotatebox{-30}{\textcolor{blue}{Practical Performance}}}
  \put(36,42){\rotatebox{-30}{\textcolor{red}{Our Theory}}}
  \end{Overpic}
  
  \vspace{-.6cm}
  
  \caption{Performance of the multiparameter eigenproblem method on \cref{ex:MEPdev} for $d \geq 2$ and $\sigma = \frac{1}{100}$. We plot the practical performance against the theoretical performance of a stable algorithm and the prediction given by \cref{thm:multipareig}.}
  \label{fig:devexhighdimMEP}
\end{figure}

We have focused on the conditioning of the operator determinants method in this section rather than the conditioning of the original multiparameter eigenproblem, which by the analysis of \cite{hochstenbach2003conditioning} can be as well-conditioned as the original system, so our analysis does not rule out the possibility of a better method for solving multiparameter eigenproblems. However, the operator determinants method is the only global method for solving the multiparameter eigenproblem that we know. As an additional consequence, this observation suggests that there is no known stable global method to solve multiparameter eigenproblems.


\section{Normal Form Methods}\label{sec:MS}
The structure of $\mathbb{C}[x_1,\ldots,x_d]/\langle p_1,\ldots,p_d \rangle$ gives rise to another class of rootfinding techniques, known as normal form methods \cite{telen2018normalform, telen2018normalform2, mourrain2021normalform}. Given a zero-dimensional polynomial system in \cref{eq:polysystem}, $\mathbb{C}[x_1,\ldots,x_d]/\langle p_1,\ldots,p_d \rangle$ is a finite-dimensional $\mathbb{C}$-vector space with dimension equal to the number of roots of the system and the map given by multiplication by $x_i$ is a linear map \cite{telen2018normalform}. Given a basis $\mathcal{B}$ for $\mathbb{C}[x_1,\ldots,x_d]/\langle p_1,\ldots,p_d \rangle$, we can construct the matrix $M_{x_i}$ for this map; we call these M\"oller--Stetter (MS) matrices.

We can construct a basis for $\mathbb{C}[x_1,\ldots,x_d]/\langle p_1,\ldots,p_d \rangle$ consisting of Lagrange interpolants. In this basis, the matrices $M_{x_i}$ are diagonal with entries that are the coordinates of the roots. Therefore, the eigenvalues of the MS matrices in any basis $\mathcal{B}$ are the coordinates of the roots. A normal form method solves \cref{eq:polysystem} by constructing, and then solving, the MS eigenproblems
\begin{equation} \label{eq:MSEVP}
M_{x_i} \mathbf{w} = x_i \mathbf{w}, \quad 1 \leq i \leq d.
\end{equation}

\subsection{Conditioning of the M\"oller--Stetter Eigenproblems}
To analyze the conditioning of the MS eigenproblems in \cref{eq:MSEVP} we need to characterize the left and right eigenvectors of the MS matrices. For a given basis $\mathcal{B}$, the left eigenvector corresponding to a coordinate $x_i^*$ of a particular root $\mathbf{x}^*=(x_1^*,\ldots,x_d^*)$ is $\mathcal{B}(\mathbf{x}^*)$. The right eigenvector is $[q]_{\mathcal{B}}$, where $q$ is the Lagrange interpolant for $p_1,\ldots,p_d$ that vanishes at every root of $p_1,\ldots,p_d$ except for $\mathbf{x}^*$, and $[q]_{\mathcal{B}}$ denotes the expression of the interpolant in the basis $\mathcal{B}$, also known as its normal form. We drop the subscript from $q$ for ease of notation. We begin with an extension of \cite[Lemma 7.1]{parkinson2022normalform} that gives the Lagrange interpolants. We use the notation $\mathbb{C}[x_1,\ldots,x_d]^{d \times d}$ to denote the space of $d \times d$ matrices with entries in $\mathbb{C}[x_1,\ldots,x_d]$.

\begin{proposition}
Suppose $Q \in \mathbb{C}[x_1,\ldots,x_d]^{d \times d}$ and \cref{eq:polysystem} has the form
\begin{equation}
p_i = r_i(x_1,...,x_d)(x_i-x_i^*) + \sum_{j=1}^d q_{i j}(x_1,\ldots,x_d) (x_j-x_j^*), \quad 1 \leq i \leq d,
\end{equation}
with $r_i \in \mathbb{C}[x_1,\ldots,x_d]$, $q_{ij}$ the $(i,j)$ entry of $Q$, and $(x_1^*,\ldots,x_d^*) \in \mathbb{C}^d$. Then the Lagrange interpolant that vanishes at every root of $p$ except $(x_1^*,\ldots,x_d^*)$ is
\begin{equation}
q = \sum_{\imath \subseteq [d]} \det(Q_\imath) \prod_{k \in \imath} r_k,
\end{equation}
where $[d] = \{1,...,d\}$, and $Q_\imath$ is $Q$ with the $\imath$-th rows and columns removed.
\end{proposition}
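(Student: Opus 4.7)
The plan is to recognize that the hypothesized form of the system lets us write $p_i$ as a linear combination of the shifted variables $(x_j - x_j^*)$, with a matrix of coefficients that is $Q$ plus a diagonal matrix built from the $r_i$'s. Its determinant will turn out to be exactly the claimed formula for $q$, and the vanishing properties will follow by examining the null space of this matrix at the various roots.

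\textbf{Step 1: Rewrite the system in matrix form.} Set $D(\mathbf{x}) = \mathrm{diag}(r_1(\mathbf{x}), \ldots, r_d(\mathbf{x}))$ and $A(\mathbf{x}) = Q(\mathbf{x}) + D(\mathbf{x})$. Then the hypothesis gives
\begin{equation*}
p_i(\mathbf{x}) = \sum_{j=1}^d A_{ij}(\mathbf{x})\,(x_j - x_j^*), \qquad 1 \leq i \leq d,
\end{equation*}
so that $p(\mathbf{x}) = A(\mathbf{x})(\mathbf{x} - \mathbf{x}^*)$, where $p = (p_1,\ldots,p_d)^{\top}$. In particular, $p(\mathbf{x}^*) = 0$, confirming $\mathbf{x}^*$ is a root.

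\textbf{Step 2: Expand $\det(A)$ by multilinearity.} View $A$ as $Q + D$ column by column: the $k$-th column is the $k$-th column of $Q$ plus $r_k \mathbf{e}_k$. Multilinearity of the determinant in the columns decomposes $\det(A)$ as a sum over subsets $\imath \subseteq [d]$, where the term indexed by $\imath$ takes the columns $r_k \mathbf{e}_k$ for $k \in \imath$ and the corresponding columns of $Q$ otherwise. Expanding each such determinant along the columns $r_k \mathbf{e}_k$ (each of which contributes only a single nonzero entry $r_k$ in row $k$) produces
\begin{equation*}
\det(A) = \sum_{\imath \subseteq [d]} \det(Q_\imath) \prod_{k \in \imath} r_k = q,
\end{equation*}
with the signs collapsing cleanly because identical row and column indices are removed (this deserves a quick sign check but is routine).

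\textbf{Step 3: Vanishing at the other roots.} If $\mathbf{x}'$ is a root of $p$ distinct from $\mathbf{x}^*$, then $A(\mathbf{x}')(\mathbf{x}' - \mathbf{x}^*) = p(\mathbf{x}') = 0$ and $\mathbf{x}' - \mathbf{x}^* \neq 0$, so $A(\mathbf{x}')$ is singular and $q(\mathbf{x}') = \det(A(\mathbf{x}')) = 0$.

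\textbf{Step 4: Nonvanishing at $\mathbf{x}^*$.} Differentiating the identity $p_i(\mathbf{x}) = \sum_j A_{ij}(\mathbf{x})(x_j - x_j^*)$ gives
\begin{equation*}
\frac{\partial p_i}{\partial x_j}(\mathbf{x}^*) = A_{ij}(\mathbf{x}^*),
\end{equation*}
since the derivative of the product $A_{ik}(\mathbf{x})(x_k - x_k^*)$ at $\mathbf{x}^*$ picks up only the term where the derivative falls on the factor $(x_k - x_k^*)$. Hence $J(\mathbf{x}^*) = A(\mathbf{x}^*)$, and by the standing simple-root assumption $q(\mathbf{x}^*) = \det J(\mathbf{x}^*) \neq 0$.

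The main obstacle, really the only delicate point, is Step 2: tracking the sign in the multilinear expansion. It works out because the columns of the form $r_k \mathbf{e}_k$ have their unique nonzero entry in the corresponding row $k$, so the cofactor expansion removes both the row $k$ and the column $k$ simultaneously, producing precisely $\det(Q_\imath)$ without residual sign.
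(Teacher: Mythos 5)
Your proof is correct, and it takes a genuinely different — and considerably shorter — route than the paper's. You reformulate the hypothesis as the matrix identity $p(\mathbf{x}) = A(\mathbf{x})(\mathbf{x}-\mathbf{x}^*)$ with $A = Q + \diag(r_1,\ldots,r_d)$, then establish $q = \det(A)$ via multilinearity of the determinant (your sign check is right: each column $r_k\mathbf{e}_k$ has its sole nonzero entry on the diagonal, so repeated cofactor expansion deletes matching row/column pairs with sign $(-1)^{k+k}=+1$). Vanishing at the other roots is then just the observation that $A(\mathbf{x}')$ annihilates the nonzero vector $\mathbf{x}'-\mathbf{x}^*$ and must be singular; nonvanishing at $\mathbf{x}^*$ follows because $A(\mathbf{x}^*) = J(\mathbf{x}^*)$ is invertible by the simple-root assumption. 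By contrast, the paper proves the result purely algebraically by verifying the exact ideal membership $(x_i - x_i^*)\,q = \sum_j p_j\bigl(\sum_{\imath\subseteq[d]\setminus\{i,j\}} \cof_{ji}(Q_\imath)\prod_{k\in\imath}r_k\bigr)$ via a lengthy cofactor computation; this is heavier, but it yields an explicit expression for $(x_i-x_i^*)q$ as a combination of the $p_j$'s, which is a stronger algebraic fact than the pointwise vanishing you prove. For the proposition as stated — which defines a Lagrange interpolant purely by its evaluations at the roots — your evaluative argument is fully sufficient and more transparent.
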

\begin{proof}
See \cref{app:MSEVproof}.
\end{proof}

We can always write a system with a root at $(x_1^*,\ldots,x_d^*)$ as
\begin{equation} \label{eq:genpolyform}
\begin{pmatrix}
    p_1(x_1,\ldots,x_d) \\
    \vdots \\
    p_d(x_1,\ldots,x_d)
\end{pmatrix} = Q
\begin{pmatrix}
    x_1 - x_1^* \\
    \vdots \\
    x_d-x_d^*
\end{pmatrix}
\end{equation}
for some $Q \in \mathbb{C}[x_1,\ldots,x_d]^{d \times d}$, and the Lagrange interpolant at $(x_1^*,\ldots,x_d^*)$ is $[\det(Q)]_{\mathcal{B}}$. This form enables the following complete characterization of the conditioning of the MS eigenproblems in \cref{eq:MSEVP}.
\begin{theorem} \label{thm:highdimMS}
    Let $p = (p_1,\ldots,p_d)$ be a zero-dimensional polynomial system in \cref{eq:polysystem} with all simple roots and no roots at infinity. Let $\mathbf{x}^* = (x_1^*,\ldots,x_d^*)$ be a root and write $p$ in the form \cref{eq:genpolyform}. Let $M_{x_i}$ be a M\"oller--Stetter matrix for $p$ with respect to a basis $\mathcal{B}$. Then, the absolute condition number of the eigenvalue $x_i^*$ is
    $$
    \kappa_{\text{eig}}(x_i^*) = \frac{||[\det(Q)]_{\mathcal{B}}||_2||\mathcal{B}(\mathbf{x}^*)||_2}{\lvert \det(J(\mathbf{x}^*)) \rvert }(1 + |x_i^*|),
    $$
    where $J$ is the Jacobian of $p$.
\end{theorem}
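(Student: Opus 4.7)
The plan is to apply the standard-eigenproblem version of \cref{eq:geneigconditioning} (specialized to $B = I$) to the M\"oller--Stetter matrix $M_{x_i}$. The paragraph immediately preceding the theorem already identifies the left eigenvector associated with the eigenvalue $x_i^*$ as $\mathcal{B}(\mathbf{x}^*)$ and the right eigenvector as $[q]_{\mathcal{B}}$, where $q$ is any Lagrange interpolant vanishing at every root except $\mathbf{x}^*$. The simple-root hypothesis, together with the fact that $\mathbf{x}^*$ is a simple root of the system, guarantees that this pair of eigenvectors is not orthogonal, so \cref{eq:geneigconditioning} applies.

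Next I would use \cref{eq:genpolyform} together with the preceding proposition. Because \cref{eq:genpolyform} absorbs everything into $Q$ (i.e., each $r_i$ in the proposition may be taken to be zero), only the $\imath = \emptyset$ summand in the proposition's formula survives, and one may take $q = \det(Q)$. Substituting into \cref{eq:geneigconditioning} gives
\[
\kappa_{\text{eig}}(x_i^*) = \frac{\lVert \mathcal{B}(\mathbf{x}^*) \rVert_2 \, \lVert [\det(Q)]_{\mathcal{B}} \rVert_2}{\lvert \mathcal{B}(\mathbf{x}^*)^{\top} [\det(Q)]_{\mathcal{B}} \rvert} \,(1 + \lvert x_i^* \rvert),
\]
so the only remaining task is to identify the denominator with $\lvert \det(J(\mathbf{x}^*)) \rvert$.

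For that identification, the pairing of a coefficient vector with the evaluated basis is just polynomial evaluation: $\mathcal{B}(\mathbf{x}^*)^{\top} [\det(Q)]_{\mathcal{B}} = \det(Q)(\mathbf{x}^*) = \det(Q(\mathbf{x}^*))$. To match $Q(\mathbf{x}^*)$ with the Jacobian, I would differentiate the identity $p_i = \sum_j q_{ij}(x_1,\ldots,x_d)(x_j - x_j^*)$ read off from \cref{eq:genpolyform}: every term involving $\partial q_{ij}/\partial x_k$ carries a factor $(x_j - x_j^*)$ that vanishes at $\mathbf{x}^*$, so $\partial p_i/\partial x_k(\mathbf{x}^*) = q_{ik}(\mathbf{x}^*)$ and hence $J(\mathbf{x}^*) = Q(\mathbf{x}^*)$. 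The theorem then assembles from these identifications. There is no serious obstacle; the only subtlety worth flagging is that $q$ is not normalized to $q(\mathbf{x}^*) = 1$, but \cref{eq:geneigconditioning} is invariant under rescaling the right eigenvector, so the scaling-independent quantity that emerges in the denominator is precisely $\lvert \det(J(\mathbf{x}^*)) \rvert$.
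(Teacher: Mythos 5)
Your proof is correct and follows essentially the same route as the paper's: apply the conditioning formula \cref{eq:geneigconditioning} with $B = I$ to the M\"oller--Stetter matrix using the left eigenvector $\mathcal{B}(\mathbf{x}^*)$ and right eigenvector $[\det(Q)]_{\mathcal{B}}$, then identify the denominator $\mathcal{B}(\mathbf{x}^*)^{\top}[\det(Q)]_{\mathcal{B}} = \det(Q(\mathbf{x}^*)) = \det(J(\mathbf{x}^*))$. The paper states this identity without elaboration; your additional steps (specializing the Lagrange-interpolant proposition to $r_i = 0$, differentiating \cref{eq:genpolyform} to show $Q(\mathbf{x}^*) = J(\mathbf{x}^*)$, and noting the scaling invariance) are all correct and simply make explicit what the paper leaves implicit.
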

\begin{proof}
Note that $[\det(Q)]_{\mathcal{B}}^{\top} \mathcal{B}(\mathbf{x}^*) = \det(Q(\mathbf{x}^*)) = \det(J(\mathbf{x}^*))$, so the result is a direct application of \cref{eq:geneigconditioning} with $B=I$. 
\end{proof}

We use this to demonstrate that normal form methods are exponentially unstable under a reasonable assumption on $\mathcal{B}$. In \cite{telen2018normalform, telen2018normalform2, mourrain2021normalform}, a basis is selected from a set of basis monomials of degree $< \rho = \left( \sum_{i=1}^d \deg(p_i) \right) - d + 1$. We assume that the basis is selected in this way. For any reasonable set of basis monomials, any basis $\mathcal{B}$ selected in this manner additionally satisfies $||\mathcal{B}(0,\ldots,0)||_2 \geq 1$.

\begin{ex} \label{ex:MSDev}
    Let $P$ be a $d \times d$ permutation matrix and $\sigma > 0$. For each $i$, there is an integer $j_i$ such that the $(i,j_i)$ entry of $P$ equals $1$. Consider the system in \cref{eq:polysystem} with 
    $$
   p_i(x_1,\ldots,x_d) = x_i^2 + \sigma x_{j_i}, \quad 1 \leq i \leq d.
    $$
    The system has a root at $(0,\ldots,0)$ with condition number $\kappa_{\text{root}}(0,\ldots,0) = ||J(0,\ldots,0)^{-1}||_2 = \sigma^{-1}$. We show that the condition number of the resulting MS eigenproblems in \cref{eq:MSEVP} is greater by a factor that depends exponentially on $d$.
    For this system, $\rho = d+1$, so it is possible to reduce any polynomial to its normal form using multiples $f_kp_k$ with $\deg(f_k) \leq d-1$ \cite{telen2018normalform}. In particular, $\prod_{i=1}^d x_i - \left[ \prod_{i=1}^d x_i \right]_{\mathcal{B}} \in I$ and is of degree $\leq d$ so 
    $$
    \prod_{i=1}^d x_i - \left[ \prod_{i=1}^d x_i \right]_{\mathcal{B}} = \sum_{k=1}^d f_k p_k,
    $$
    with $\deg(f_k) \leq d-1$. If the left-hand side does not vanish, then $\prod_{i=1}^d x_i$ must appear on the right-hand side. The only way that this can happen is if a term $\sigma^{-1}\left( \prod_{\substack{i=1 \\ i \neq j_{\ell}}}^d x_i \right) (x_\ell^2-\sigma x_{j_\ell})$ appears. Subtracting this term from both sides, we obtain
    $$
    -\sigma^{-1}\left(\prod_{\substack{i=1 \\ i \neq j_\ell}}^d x_i\right)(x_\ell^2) - \left[\prod_{i=1}^d x_i\right]_{\mathcal{B}} = \sum_{k=1}^d f_k' p_k.
    $$
    If the left-hand side does not vanish, then $\sigma^{-1}\left(\prod_{\substack{i=1 \\ i \neq j_\ell}}^d x_i\right)(x_\ell^2)$ must appear on the right-hand side. The only way that this can happen is if the same term $\sigma^{-1}\left(\prod_{\substack{i=1 \\ i \neq j_\ell}}^d x_i\right)(x_\ell^2-\sigma x_{j_\ell})$ appears on the right-hand side. In this way, the only possible reduction is to repeatedly substitute $\sigma^{-1} x_\ell^2$ for $x_{j_\ell}$ and then substitute back $\sigma x_{j_\ell}$ for $x_\ell^2$. As the degree of basis elements is $\leq d$, this implies that the normal form of $\prod_{i=1}^d x_i$ must be $\prod_{i=1}^d x_i$. 
    
   If $\prod_{i=1}^d x_i$ appears in the normal form of $\det(Q)- \prod_{i=1}^d x_i$, then it would be possible to reduce $\prod_{i=1}^d x_i$ to a monomial of lower degree using multiples $f_kp_k$ with $\deg(f_k) \leq d-1$, which it is not. Thus $\prod_{i=1}^d x_i$ appears in any normal form of $\det(Q)$, so $||[\det(Q)]_{\mathcal{B}}||_2 \geq 1$. By assumption, $||\mathcal{B}(0,\ldots0)||_2 \geq 1$, so $\kappa_{\text{root}}(0,\ldots,0) \geq \sigma^{-d}$.
\end{ex}

This suggests that the MS eigenproblem can introduce instability that depends exponentially on $d$, which is supported by practical experiments (see \cref{fig:devex} \textbf{(f)}).

One might hope to avoid the instability by selecting a clever basis. In particular, we know that the MS matrices in the Lagrange interpolant basis are diagonal, so the eigenproblem is trivial. However, finding the Lagrange interpolants is, in our estimation, equally as challenging as finding the roots. All known algorithms for computing MS matrices preselect basis monomials from which to choose the basis $\mathcal{B}$. Nevertheless, clever basis selection is a potential way around the instability. We address a few other interesting subtleties of normal form methods in \cref{sec:biv}.

\section{Macaulay Resultant} \label{sec:Mac}
The Macaulay resultant method forms an eigenproblem directly from the Macaulay matrix, defined in the monomial basis \cite{jonsson2005macaulay}. Consider a polynomial system in \cref{eq:polysystem}, with $h= \mu_0 + \sum_{i=1}^d \mu_i x_i$, and $\rho = \left( \sum_{i=1}^d \deg(p_i) \right) - d + 1$. For each $p_i$ (and for $h$), $M$ has rows consisting of the monomial coefficients of multiples of $p_i$ (and $h$) by all monomials of degree $\leq \rho - \deg(p_i)$ ($\leq \rho - \deg(h)$). Consider the system in \cref{ex:dev} in two variables, and let $h = \mu_0 + \mu_1 x + \mu_2 y$. Then, the Macaulay matrix in the monomial basis is
$$
M = \begin{pNiceMatrix}[first-row,first-col]
    & 1 & x & y & x^2 & xy & y^2 & x^3 & x^2y & xy^2 & y^3 \\
p_1   &  & \sigma q_{11} & \sigma q_{12} & 1 & & & & & &  \\
xp_1  & & & & \sigma q_{11} & \sigma q_{12} & & 1 & & & \\
yp_1  & & & & & \sigma q_{11} & \sigma q_{12} & & 1 & & \\
p_2   & & \sigma q_{21}& \sigma q_{22} & & & 1 & & & & \\
xp_2  & & & & \sigma q_{21} & \sigma q_{22} & & & & 1 & \\
yp_2  & & & & & \sigma q_{21} & \sigma q_{22} & & & & 1 \\
h   & \mu_0 & \mu_1 & \mu_2 & & & & & & & \\
xh  & & \mu_0 & & \mu_1 & \mu_2 & & & & &  \\
yh  & & & \mu_0 & & \mu_1 & \mu_2 & & & & \\
x^2h & & & & \mu_0 & & & \mu_1 & \mu_2 & & \\
xyh & & & & & \mu_0 & & & \mu_1 & \mu_2 & \\
y^2h & & & & & & \mu_0 & & & \mu_1 & \mu_2
\end{pNiceMatrix}.
$$

The matrix $M$ has more rows than columns. However, there exists a square submatrix of $M$ that is singular if and only if $p_1,\ldots,p_{d},h$ have a common root \cite{jonsson2005macaulay}. To obtain a square matrix, we drop some of the rows corresponding to $h$, which must be selected so that the resulting matrix is nonsingular \cite{jonsson2005macaulay}. When $d=2$, the authors in \cite{jonsson2005macaulay} provide a careful method for selecting the rows to drop. Our analysis is valid for any dropped rows that give a nonsingular submatrix. Once we have a square matrix, we set $\mu_i = \alpha_i - \lambda \beta_i$ for $\alpha_i,\beta_i$ chosen randomly. For the example above, dropping the $x^2 h$ and $y^2 h$ rows gives

\begin{center}
\resizebox{\columnwidth}{!}{
$
M(\lambda) = \begin{pNiceMatrix}[first-row,first-col]
    & 1 & x & y & x^2 & xy & y^2 & x^3 & x^2y & xy^2 & y^3 \\
p_1   &  & \sigma q_{11} & \sigma q_{12} & 1 & & & & & &  \\
xp_1  & & & & \sigma q_{11} & \sigma q_{12} & & 1 & & & \\
yp_1  & & & & & \sigma q_{11} & \sigma q_{12} & & 1 & & \\
p_2   & & \sigma q_{21}& \sigma q_{22} & & & 1 & & & & \\
xp_2  & & & & \sigma q_{21} & \sigma q_{22} & & & & 1 & \\
yp_2  & & & & & \sigma q_{21} & \sigma q_{22} & & & & 1 \\
h   & \alpha_0 - \lambda \beta_0 & \alpha_1 - \lambda \beta_1 & \alpha_2 - \lambda \beta_2 & & & & & & & \\
xh  & & \alpha_0 - \lambda \beta_0 & & \alpha_1 - \lambda \beta_1 & \alpha_2 - \lambda \beta_2 & & & & &  \\
yh  & & & \alpha_0 - \lambda \beta_0 & & \alpha_1 - \lambda \beta_1 & \alpha_2 - \lambda \beta_2 & & & & \\
xyh & & & & & \alpha_0 - \lambda \beta_0 & & & \alpha_1 - \lambda \beta_1 & \alpha_2 - \lambda \beta_2 &
\end{pNiceMatrix}.
$}   
\end{center}
This is a GEP 
\begin{equation} \label{eq:MacEVP}
M(\lambda) \mathbf{x} = (A-\lambda B) \mathbf{x} = \begin{bmatrix}
    A_1 \\
    A_2
\end{bmatrix} - 
\lambda \begin{bmatrix}
    0 \\
    B_2
\end{bmatrix}  = 0.
\end{equation}
This GEP only has $\text{rank}(B)$ standard eigenvalues; we call these finite eigenvalues.  For the Macaulay eigenproblem, the right eigenvectors corresponding to finite eigenvalues are the column labels evaluated at a root $\mathbf{x}^*$. We examine the conditioning of this eigenproblem.

\subsection{Conditioning of the Macaulay Resultant Eigenproblem}

Although the roots are extracted from the eigenvectors, we have found it easier to examine eigenvalue conditioning. Suppose we know the coordinates of a root $\mathbf{x}^* = (x_1^*,\ldots,x_d^*)$. The eigenvalue $\lambda^*$ corresponding to this root is the solution to $h(\lambda)(\mathbf{x^*}) = 0$. We assume all the randomly chosen coefficients $\alpha_i,\beta_i$ are $O(1)$, so solving this simple linear equation for $\lambda^*$ is well-conditioned. Thus, the eigenvalue condition number provides a lower bound on the condition number of finding the root $\mathbf{x}^*$ from the GEP in \cref{eq:MacEVP}. 


As stated above, the right eigenvector corresponding to a root $\mathbf{x}^*$ is the evaluation of the column labels at the root $\mathbf{x}^*$. An eigenvalue is a choice of $\lambda$ for which $h(\lambda^*)$ has a common root with $p_1,\ldots,p_d$. Then, the left eigenvector expresses a combination
\begin{equation} \label{eq:Macaulaydependence}
\sum_{i=1}^{d} P_i p_i + Hh(\lambda^*)= 0,
\end{equation}
where $P_i,H \in \mathbb{C}[x_1,\ldots,x_d]$, so that in particular $Hh(\lambda^*) \in \langle p_1,\ldots,p_d \rangle$. Thus, $H h(\lambda^*)$ must vanish at all roots of the system. However, $h(\lambda^*)$ only vanishes at $\mathbf{x}^*$, so $H$ vanishes at all roots $\neq \mathbf{x}^*$. Thus, either $H \in \langle p_1,\ldots,p_d \rangle$ or $H$ is the Lagrange interpolant for the system at $\mathbf{x}^*$. If $H \in \langle p_1,\ldots,p_d \rangle$, then for all $\lambda$ there exists a combination \cref{eq:Macaulaydependence}, which implies that the eigenproblem is singular. Thus, $H$ is the Lagrange interpolant, given by the same formula as in \cref{sec:MS}. That is, if we write the system as in \cref{eq:genpolyform} for some polynomial matrix $Q$, then $H = \det(Q)$  in $\mathbb{C}[x_1,\ldots,x_d] / \langle p_1,\cdots,p_d \rangle$. 

We want to understand exactly how the equivalence in $\mathbb{C}[x_1,\ldots,x_d] / \langle p_1,\cdots,p_d \rangle$ plays out.  For this, we note that the number of finite eigenvalues of the problem $A - \lambda B$ must equal the number of roots of \cref{eq:polysystem}. This implies that the number of rows corresponding to $h$ is equal to the dimension of $\mathbb{C}[x_1,\ldots,x_d]/ \langle p_1,\ldots,p_d \rangle$. The monomials corresponding to these rows must be independent in $\mathbb{C}[x_1,\ldots,x_d] / \langle p_1,\cdots,p_d \rangle$ to have a nonsingular eigenproblem.  Hence, they are a basis $\mathcal{B}$ for $\mathbb{C}[x_1,\ldots,x_d]/ \langle p_1,\ldots,p_d \rangle$. Then $H = [\det(Q)]_{\mathcal{B}}$. We conclude the following, which is analogous to \cref{thm:highdimMS}.

\begin{theorem} \label{thm:highdimMac}
    Let $p = (p_1,\ldots,p_d)$ be a zero-dimensional polynomial system in \cref{eq:polysystem} with all simple roots and no roots at infinity. Let $\mathbf{x}^* = (x_1^*,\ldots,x_d^*)$ be a root and write $p$ in the form \cref{eq:genpolyform}. Let $M = A-\lambda B$ be a Macaulay resultant eigenproblem in \cref{eq:MacEVP} for $p$ constructed with a random linear polynomial $h$ and $\mathcal{B}$ the basis given by the rows corresponding to $h$. Then, the absolute condition number of the eigenvalue $\lambda^*$ corresponding to the root $\mathbf{x}^* = (x_1^*,\ldots,x_d^*)$ satisfies
    $$
    \kappa_{\text{eig}}(\lambda^*) \geq  \frac{||[\det(Q)]_{\mathcal{B}}||_2||V(\mathbf{x}^*)||_2}{\lvert \det(J(\mathbf{x}^*))h(\mathbf{x}^*) \rvert},
    $$
    where $J$ is the Jacobian of $p$ and $V(\mathbf{x}^*)$ is the evaluation of the column labels at $\mathbf{x}^*$.
\end{theorem}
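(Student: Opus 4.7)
My plan is to combine the GEP condition number identity \cref{eq:geneigconditioning} with the characterization of the left and right eigenvectors of the Macaulay pencil already developed in the paragraphs preceding the theorem. Once those eigenvectors are in hand, the bound reduces to a direct evaluation of the bilinear form $\mathbf{y}^\top B \mathbf{x}$ that appears in the condition number formula.

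First, as noted in the preceding discussion, the right eigenvector associated with $\lambda^*$ is $\mathbf{x} = V(\mathbf{x}^*)$, the vector of monomial column labels evaluated at the root $\mathbf{x}^*$, so $\|\mathbf{x}\|_2 = \|V(\mathbf{x}^*)\|_2$. The left eigenvector $\mathbf{y}$ encodes the polynomial identity $\sum_{i=1}^d P_i p_i + H\, h(\lambda^*) = 0$, with its entries on the rows indexed by multiples of $p_i$ giving the coefficients of $P_i$ and its entries on the rows indexed by $m' \in \mathcal{B}$ giving the coefficients of $H$ in the basis $\mathcal{B}$. The argument preceding the theorem shows $H$ is (up to scalar) the Lagrange interpolant for $\mathbf{x}^*$, and applying the Lagrange interpolant formula of \cref{sec:MS} to the decomposition \cref{eq:genpolyform} (in which all of the $r_i$ vanish) collapses this interpolant to $\det(Q)$ in $\mathbb{C}[x_1,\ldots,x_d]/\langle p_1,\ldots,p_d\rangle$. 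Normalizing so that $H = [\det(Q)]_{\mathcal{B}}$ then gives $\|\mathbf{y}\|_2 \geq \|[\det(Q)]_{\mathcal{B}}\|_2$.

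Next, I evaluate $\mathbf{y}^\top B \mathbf{x}$ directly. Because $B$ is nonzero only in the rows indexed by multiples $m'\, h$ with $m' \in \mathcal{B}$, and because the row $m'\, h$ of $B$ records the monomial coefficients of the $\lambda$-coefficient of $m'(\mathbf{x})\, h(\lambda, \mathbf{x})$, the component of $B\mathbf{x}$ at row $m'\, h$ is $m'(\mathbf{x}^*)\, h(\mathbf{x}^*)$ (with $h(\mathbf{x}^*)$ denoting the $\lambda$-coefficient of $h$ evaluated at $\mathbf{x}^*$). Consequently,
$$
\mathbf{y}^\top B \mathbf{x} = \sum_{m' \in \mathcal{B}} y_{m'h}\, m'(\mathbf{x}^*)\, h(\mathbf{x}^*) = H(\mathbf{x}^*)\, h(\mathbf{x}^*) = \det(J(\mathbf{x}^*))\, h(\mathbf{x}^*),
$$
where the final equality uses $H(\mathbf{x}^*) = \det(Q)(\mathbf{x}^*) = \det(J(\mathbf{x}^*))$, which follows from differentiating \cref{eq:genpolyform} at $\mathbf{x}^*$. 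Substituting these three ingredients into \cref{eq:geneigconditioning} and bounding $(1 + |\lambda^*|) \geq 1$ yields the stated lower bound. The one subtle step is the block-structure bookkeeping for $\mathbf{y}^\top B \mathbf{x}$: matching the nonzero rows of $B$ against the identification of the left eigenvector's $h$-row components with the normal form $[\det(Q)]_{\mathcal{B}}$. Everything else is a repackaging of the Lagrange interpolant machinery of \cref{sec:MS} and the eigenvector description already assembled in this section.
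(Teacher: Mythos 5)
Your proposal is correct and follows essentially the same route as the paper's proof: invoke \cref{eq:geneigconditioning}, identify the right eigenvector as $V(\mathbf{x}^*)$ and the left eigenvector's $h$-block as $[\det(Q)]_{\mathcal{B}}$ via the Lagrange interpolant machinery, evaluate $\mathbf{y}^{\top}B\mathbf{x} = \det(J(\mathbf{x}^*))h(\mathbf{x}^*)$, and bound $\|\mathbf{y}\|_2 \geq \|[\det(Q)]_{\mathcal{B}}\|_2$ and $(1+|\lambda^*|)\geq 1$. The only cosmetic difference is that you write out the bilinear form $\mathbf{y}^{\top}B\mathbf{x}$ as an explicit sum over $m'\in\mathcal{B}$ whereas the paper packages it as the coefficient vector of $[\det(Q)]_{\mathcal{B}}\cdot h$ dotted with $V(\mathbf{x}^*)$; these are the same computation.
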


\begin{proof}
From \cref{eq:geneigconditioning} we have
$$
\kappa_{\text{eig}}(\lambda^*) \geq \frac{||\mathbf{v}||_2||\mathbf{w}||_2}{\lvert \mathbf{v}^{\top}B\mathbf{w} \rvert},
$$
where $\mathbf{v}$ and $\mathbf{w}$ are the left and right eigenvectors. Let the system have $r$ roots. Because of the block structure of $B$, only the last $r$ entries of $\mathbf{v}$ affect $\mathbf{v}^{\top}B\mathbf{w}$, which becomes $[\det(Q)]_{\mathcal{B}}^{\top} B_1 V(\mathbf{x}^*)$, where $B_1$ is the lower nonzero block of $B$ consisting of the rows corresponding to $h$. Then $[\det(Q)]_{\mathcal{B}}^{\top} B_1$ is the coefficient vector of $[\det(Q)]_{\mathcal{B}} \cdot h$ with respect to the column labels, so $[\det(Q)]_{\mathcal{B}}^{\top} B_1 V(\mathbf{x}^*) = (\det(Q) \cdot h)(\mathbf{x}^*) = \det(J(\mathbf{x}^*))h(\mathbf{x}^*)$, and the result follows, noting $||v||_2 \geq ||[\det(Q)]_{\mathcal{B}}||_2$.
\end{proof}

We complete this section with a devastating example for the Macaulay resultant method, which is the same system as in \cref{ex:MSDev}. We assume that $||V(0,\ldots,0)||_2 \geq 1$, which holds for any reasonable polynomial basis.

\begin{ex} \label{ex:MacDev}
    Let $P$ be a $d \times d$ permutation matrix and $\sigma > 0$. For each $i$, there is an integer $j_i$ such that the $(i,j_i)$ entry of $P$ equals $1$. Consider the system in \cref{eq:polysystem} with 
    $$
   p_i(x_1,\ldots,x_d) = x_i^2 + \sigma x_{j_i} \quad 1 \leq i \leq d.
    $$
    The system has a root at $(0,\ldots,0)$ with condition number $\kappa_{\text{root}}(0,\ldots,0) = ||J(0,\ldots,0)^{-1}||_2 = \sigma^{-1}$.  By the analysis in \cref{ex:MSDev}, $||[\det(Q)]_{\mathcal{B}}||_2 \geq 1$, and, by assumption, $||V(0,\ldots,0)||_2 \geq 1$. So, if $\lambda^*$ is the eigenvalue corresponding to $(0,\ldots,0)$, then $\kappa_{\text{eig}}(\lambda^*) \geq \sigma^{-d} \lvert h(0,\ldots,0)^{-1} \rvert$.
\end{ex}

Thus,  there is an instability with a factor that depends exponentially on $d$, which is observed in our experiments (see \cref{fig:devex} \textbf{(g)}). We refine some of these results in \cref{sec:biv}.

\section{Bivariate Refinement of Conditioning Formulas for Normal Form Methods and Macaulay Resultant Matrices} \label{sec:biv}

Examine the numerator of the formulas in \cref{thm:highdimMS} and \cref{thm:highdimMac}. 
For normal form methods, if $||[\det(Q)]_{\mathcal{B}}||_2||\mathcal{B}(\mathbf{x}^*)||_2 \approx 1$,  then the condition number is $\kappa_{\text{eig}}(x_i^*) \approx \lvert \det(J(\mathbf{x}^*))^{-1} \rvert$, which aligns with results from \cite{noferini2016instability} and \cref{sec:MEP}.
Similarly, for the Macaulay resultant method, if $||[\det(Q)]_{\mathcal{B}}||_2||V(\mathbf{x}^*)||_2 \approx 1$ then $\kappa_{\text{eig}}(\lambda^*) \approx \lvert \det(J(\mathbf{x}^*))^{-1} \rvert$.
However, this does not always occur. As discussed in \cref{sec:MS}, it is in theory possible to select $\mathcal{B}$ to make the MS eigenproblem trivial; the same is true for the Macaulay resultant eigenproblem. However, all known normal form algorithms choose basis monomials from which to subselect the basis $\mathcal{B}$, and similarly all known Macaulay resultant algorithms preselect the basis monomials for the Macaulay matrix, which typically makes $||\mathcal{B}(\mathbf{x}^*)||_2= O(1)$ and $||V(\mathbf{x}^*)||_2 = O(1)$. Of more interest is the fact that $\det(Q)$ can reduce in unexpected ways in the basis $\mathcal{B}$, as demonstrated in the following example.

\begin{ex} \label{ex:notdev}
Let $A$ be a $2 \times 2$ orthogonal matrix and $\sigma > 0$. Consider
$$
\begin{pmatrix}
p_1\\
p_2
\end{pmatrix}
= 
\begin{pmatrix}
x^2 + \sigma a_{11}x + \sigma a_{12}y \\
xy + \sigma y^2 + \sigma a_{21}x + \sigma a_{22}y
\end{pmatrix}
=
\begin{pmatrix}
x + \sigma a_{11} & \sigma a_{12} \\
y + \sigma a_{21} & \sigma y + \sigma a_{22}
\end{pmatrix}
\begin{pmatrix}
x \\
y
\end{pmatrix}.
$$
Suppose that $\mathcal{B} = \{ 1,x,y,y^2 \}$. Then
$
[\det(Q)]_{\mathcal{B}}
= -\sigma^2y^2 + \sigma(a_{22}x-a_{12}y) - \sigma^2 a_{21}x + \sigma^2 (a_{11}-a_{22}) y + \sigma^2,
$
so $||[\det(Q)]_{\mathcal{B}}||_2 = O(\sigma)$, and the resulting eigenvalue condition number for one of the MS eigenproblems or for the Macaulay resultant eigenproblem is
$
O(\sigma^{-1})
$
while $\lvert \det(J(0,0))^{-1} \rvert = \sigma^{-2}$.
\end{ex}
Thus, the conditioning of the eigenproblems can meaningfully differ from $\lvert \det(J(\mathbf{x}^*))^{-1} \rvert$. In two dimensions, we prove that when the eigenvalue conditioning differs from $\lvert \det(J(\mathbf{x}^*))^{-1} \rvert$, other parts of the algorithms become unstable. 

To facilitate this analysis, let $\hat{M}_{\rho}$ be the submatrix of the Macaulay matrix constructed as in \cref{sec:Mac} consisting of only rows corresponding to multiples of $p_1,\ldots,p_d$. For the system in \cref{ex:dev} in two variables:
$$
\hat{M}_{\rho} = \begin{pNiceMatrix}[first-row,first-col]
    & 1 & x & y & x^2 & xy & y^2 & x^3 & x^2y & xy^2 & y^3 \\
p_1  &  & \sigma q_{11} & \sigma q_{12} & 1 & & & & & &  \\
xp_1  & & & & \sigma q_{11} & \sigma q_{12} & & 1 & & & \\
yp_1  & & & & & \sigma q_{11} & \sigma q_{12} & & 1 & & \\
p_2   & & \sigma q_{21}& \sigma q_{22} & & & 1 & & & & \\
xp_2  & & & & \sigma q_{21} & \sigma q_{22} & & & & 1 & \\
yp_2  & & & & & \sigma q_{21} & \sigma q_{22} & & & & 1 \\
\end{pNiceMatrix}.
$$
Analogously, $\hat{M}_{\rho-1}$ consists of rows that are multiples of $p_i$ by monomials of degree $\leq \rho-1-\deg(p_i)$. The next lemma helps connect the behavior of the system in \cref{ex:notdev} to $\hat{M}_{\rho}$.
\begin{lemma} \label{lem:2DMacaulaymatrix}
    Suppose that $I = \langle p_1,p_2 \rangle$ is a zero-dimensional radical ideal with no roots at infinity. Let $\rho = \deg(p_1)+\deg(p_2)-1$. Then, the rows of the Macaulay matrix $\hat{M}_{\rho-1}$ span $I_{\leq \rho -1}$, the space of polynomials in $I$ of degree $\leq \rho-1$.
\end{lemma}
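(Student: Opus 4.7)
The plan is to show both inclusions. The row span of $\hat M_{\rho-1}$ lies inside $I_{\leq \rho-1}$ by construction, since each row is the monomial coefficient vector of some product $m p_i$ with $\deg(m p_i) \leq \rho - 1$, and any such product lies in $I$. The substantive direction is the reverse: given $f \in I_{\leq \rho -1}$, I must show that some representation $f = g_1 p_1 + g_2 p_2$ satisfies $\deg g_i \leq \rho - 1 - \deg p_i$ for both $i$.

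The key tool is the Koszul syzygy for the leading forms. Writing $d_i = \deg p_i$ and letting $\tilde p_i$ denote the degree-$d_i$ homogeneous part of $p_i$, the no-roots-at-infinity hypothesis says $\tilde p_1$ and $\tilde p_2$ have no common zero on the line at infinity, and hence no common factor in $\mathbb{C}[x,y]$; they therefore form a regular sequence in the graded ring. Any graded identity $G_1 \tilde p_1 + G_2 \tilde p_2 = 0$ accordingly forces $G_1 = H \tilde p_2$ and $G_2 = -H \tilde p_1$ for a homogeneous $H$ with $\deg H = \deg G_1 - d_2$.

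Starting from an arbitrary representation $f = g_1 p_1 + g_2 p_2$, I would argue by descending induction on $m := \max(\deg g_1 + d_1,\, \deg g_2 + d_2)$. If $m \leq \rho - 1$, the degree bounds are met. Otherwise $\deg f \leq \rho - 1 < m$ forces the degree-$m$ part of $g_1 p_1 + g_2 p_2$ to vanish, so $G_1 \tilde p_1 + G_2 \tilde p_2 = 0$, where $G_i$ is the degree-$(m - d_i)$ part of $g_i$. By the Koszul observation, there is a homogeneous $H$ of degree $m - d_1 - d_2$ with $G_1 = H \tilde p_2$ and $G_2 = -H \tilde p_1$. Replacing $(g_1, g_2)$ by $(g_1 - H p_2,\, g_2 + H p_1)$ leaves $f$ unchanged, since $p_1 p_2 - p_2 p_1 = 0$, and strictly decreases $m$ because the degree-$(m-d_i)$ parts of the new $g_i$ cancel by construction.

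The main subtlety is the borderline case $m = \rho = d_1 + d_2 - 1$, in which the prescribed $H$ would need negative degree and therefore vanish. I would handle this by noting that $m = \rho$ cannot actually arise: $H = 0$ would force $G_1 = G_2 = 0$, but the maximality of $m$ ensures that at least one of $G_1, G_2$ is the nonzero leading part of $g_i$, and a short argument using the integral-domain property of $\mathbb{C}[x,y]$ combined with the coprimality of $\tilde p_1, \tilde p_2$ shows that in fact both must be nonzero simultaneously. Thus whenever $m > \rho - 1$ we actually have $m \geq d_1 + d_2$, the Koszul swap applies, and the induction terminates in finitely many steps with a representation of $f$ that exhibits it as an element of the row span of $\hat M_{\rho-1}$.
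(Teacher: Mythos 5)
Your proof is correct, and it takes a genuinely different route from the paper's. The paper leans on the cited fact that the rows of $\hat{M}_\rho$ already span $I_{\leq\rho}$; starting from the resulting bounded representation $f = a p_1 + b p_2$ with $\deg a \leq \rho - \deg p_1$ and $\deg b \leq \rho - \deg p_2$, it needs only a single degree-count: if $\deg f < \rho$ but a degree bound were tight, the vanishing degree-$\rho$ form $a'p_1' + b'p_2' = 0$ would force $p_1' \mid b'$, which is impossible since $\deg b' = \deg p_1 - 1$. Your argument is self-contained: you start from an \emph{arbitrary} representation $f = g_1 p_1 + g_2 p_2$ with no degree control at all, and run a descending induction on $m = \max(\deg g_i + \deg p_i)$, doing an explicit Koszul swap $(g_1,g_2)\mapsto(g_1 - Hp_2, g_2 + Hp_1)$ at each step and separately ruling out the borderline $m=\rho$. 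Both arguments rest on the same engine — no roots at infinity forces the leading forms $\tilde p_1, \tilde p_2$ to be coprime — but you buy independence from the external spanning result at the cost of the induction, whereas the paper buys brevity at the cost of a citation (whose proof is a close cousin of your descent). A minor note: the paper's displayed step treats only the case where \emph{both} cofactors attain maximal degree, handling the asymmetric cases implicitly; your bookkeeping via $m$ and the observation that $G_1 = 0 \Leftrightarrow G_2 = 0$ covers all cases uniformly and is, if anything, cleaner.
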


\begin{proof}
    We know from \cite{telen2018normalform} and \cite[Chapt. 1, p.46]{dickenstien2005polysystem} that any polynomial in $I_{\leq \rho-1}$ can be generated by a linear combination of the rows of $\hat{M}_{\rho}$ because the rows of $\hat{M}_{\rho}$ span $I_{\leq \rho}$, so it suffices to prove that such a combination always generates a polynomial of degree exactly $\rho$ if it includes rows that are in $\hat{M}_{\rho} \backslash \hat{M}_{\rho-1}$. Let $\deg(ap_1+bp_2) < \rho$, where $a,b \in \mathbb{C}[x_1,x_2]$, $a$ has degree $\rho-\deg(p_1)$, and $b$ has degree $\rho-\deg(p_2)$. The degree $\rho$ part is
    $
    a'p_1'+b'p_2' = 0,
    $
    where $a',p_1',b',p_2'$ are the highest degree homogeneous terms of the respective polynomials. Because the ideal $\langle p_1,p_2\rangle$ has no roots at infinity, we have that $\mathcal{V}(p_1',p_2') = \emptyset$. Thus $p_1'$ and $p_2'$ are relatively prime, so $p_1' \mid b'$ and $p_2' \mid a'$, which contradicts the degree assumptions. Thus if $ap_1+bp_2$ has degree $< \rho$, then $\deg(a) < \rho-\deg(p_1)$ and $\deg(b) < \rho- \deg(p_2)$.
\end{proof}

With this, we can explain the system's behavior in \cref{ex:notdev}. Recall that for the normal form and Macaulay resultant methods, a basis $\mathcal{B}$ for $\mathbb{C}[x_1,\ldots,x_d] / \langle p_1,\ldots,p_d \rangle$ is selected from a predetermined set of basis monomials. We have the following connection.

\begin{proposition} \label{prop:2DMacSV}
    In two dimensions, with the setup as in \cref{thm:highdimMS} or \cref{thm:highdimMac}, and assuming that $p_1$ and $p_2$ are scaled so that the maximum absolute value of their coefficients is at least $1$, we have
    $
    ||[\det(Q)]_{\mathcal{B}}||_2 \geq \sigma_{\min}(\hat{M}_{\rho}),
    $
    with $\hat{M}_{\rho}$ the Macaulay matrix with respect to a set of basis monomials that contains $\mathcal{B}$.
\end{proposition}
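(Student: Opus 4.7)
The plan is to combine a degree argument on $\det(Q)$ with \cref{lem:2DMacaulaymatrix} in order to produce a polynomial identity whose coefficient-vector form is compatible with a singular-value inequality on $\hat M_{\rho}$. First I would establish that $\det(Q)$ has degree at most $\rho - 1$: writing the polynomial system in the form \cref{eq:genpolyform} forces $\deg(Q_{ij}) \leq \deg(p_i) - 1$, so $\det(Q) = Q_{11}Q_{22} - Q_{12}Q_{21}$ has total degree at most $(\deg(p_1)-1)+(\deg(p_2)-1) = \rho - 1$. Because the basis $\mathcal{B}$ is chosen from monomials of degree strictly less than $\rho$, the normal form $[\det(Q)]_{\mathcal{B}}$ also has degree at most $\rho - 1$, and therefore their difference lies in $I_{\leq \rho - 1}$.

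Second, by \cref{lem:2DMacaulaymatrix}, this difference is in the row span of $\hat{M}_{\rho-1}$, hence in the row span of $\hat{M}_{\rho}$. Thus there exists a coefficient vector $c$ such that, as vectors in the monomial-coefficient space for monomials of degree at most $\rho$, one has the identity $[\det(Q)] - [[\det(Q)]_{\mathcal{B}}] = c^{\top}\hat{M}_{\rho}$, where $[[\det(Q)]_{\mathcal{B}}]$ is extended by zeros on non-basis coordinates. Rearranging gives $[\det(Q)] = c^{\top}\hat{M}_{\rho} + [[\det(Q)]_{\mathcal{B}}]$, exhibiting the normal form as the ``irreducible residue'' of $\det(Q)$ after projection through the Macaulay rows.

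Third, I would form the augmented matrix $\tilde{M} = \bigl( \hat{M}_{\rho}^{\top}, [\det(Q)] \bigr)^{\top}$, which has rank one greater than $\hat{M}_{\rho}$ because $\det(Q) \notin I$. The identity from the previous step reads $(c^{\top},\, -1)\,\tilde{M} = -[[\det(Q)]_{\mathcal{B}}]^{\top}$, and the minimum singular value bound applied to $\tilde{M}$ yields
\[
\|[[\det(Q)]_{\mathcal{B}}]\|_2 \;\geq\; \sigma_{\min}(\tilde{M})\,\sqrt{\|c\|_2^2 + 1}.
\]
The hard part is then to promote $\sigma_{\min}(\tilde{M})$ to $\sigma_{\min}(\hat{M}_{\rho})$: interlacing under row-addition gives $\sigma_{\min}(\tilde{M}) \leq \sigma_{\min}(\hat{M}_{\rho})$, so the factor $\sqrt{\|c\|_2^2 + 1}$ must compensate. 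The scaling assumption (max absolute coefficient of $p_1, p_2$ at least $1$) enters here by forcing $\|\hat{M}_{\rho}\|_2 \geq 1$. Using the identity $c^{\top}\hat{M}_{\rho} = [\det(Q)] - [[\det(Q)]_{\mathcal{B}}]$ together with $\|c^{\top}\hat{M}_{\rho}\|_2 \leq \|\hat{M}_{\rho}\|_2 \|c\|_2$, I would argue that whenever $\|[[\det(Q)]_{\mathcal{B}}]\|_2$ is small, the vector $c$ must be correspondingly large in norm, and the combined factor recovers the claimed inequality. The main obstacle, as anticipated, is making this last compensation argument sharp; the scaling normalization is exactly what prevents it from degenerating, and any slack in the constants would be absorbed by the inequality sign in the statement.
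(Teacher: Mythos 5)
There is a genuine gap in the final step of your proposal. The first two paragraphs are fine: $\deg(\det Q)\leq \rho-1$, and by \cref{lem:2DMacaulaymatrix} the difference $\det(Q)-[\det(Q)]_{\mathcal{B}}$ lies in the row span of $\hat M_{\rho}$, so there is a $c$ with $[\det(Q)]-[[\det(Q)]_{\mathcal{B}}]=c^{\top}\hat M_{\rho}$. But the augmented-matrix step does not close. Interlacing goes the wrong way: appending the row $[\det(Q)]$ to form $\tilde M$ can only push the smallest singular value \emph{down}, so $\sigma_{\min}(\tilde M)\leq\sigma_{\min}(\hat M_{\rho})$, and your inequality $\|[[\det(Q)]_{\mathcal{B}}]\|_2\geq\sigma_{\min}(\tilde M)\sqrt{\|c\|_2^2+1}$ would need $\sqrt{\|c\|_2^2+1}\geq\sigma_{\min}(\hat M_{\rho})/\sigma_{\min}(\tilde M)$, a ratio you have no control over. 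The proposed compensation is circular: from $c^{\top}\hat M_{\rho}=[\det(Q)]-[[\det(Q)]_{\mathcal{B}}]$ one deduces $\|c\|_2$ is large only if one already knows $\|[\det(Q)]\|_2$ is large, and the scale assumption normalizes coefficients of $p_1,p_2$, not of $\det(Q)$, whose coefficients are quadratic in the entries of $Q$.

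The paper's proof uses two ideas you are missing. First, rather than chasing the residual $\det(Q)-[\det(Q)]_{\mathcal{B}}$, it exploits the non-uniqueness of $Q$ in \cref{eq:genpolyform}: using \cref{lem:2DMacaulaymatrix} it \emph{modifies $Q$ itself} (adding $x_1^{\alpha_1}x_2^{\alpha_2}p_i$ to $\det Q$ via a rank-one change to the entries of $Q$) so that $\det(Q)=[\det(Q)]_{\mathcal{B}}$ holds exactly. Second, and crucially, it invokes the adjugate identity $\mathrm{Adj}(Q)\,Q=\det(Q)\,I$, giving the explicit polynomial relations $q_{22}p_1-q_{12}p_2=x_1\det(Q)$ and $-q_{21}p_1+q_{11}p_2=x_2\det(Q)$. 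These exhibit $x_i\det(Q)$ as $v_i^{\top}\hat M_{\rho}$ where the coefficient vector $v_i$ is built directly from the entries of $Q$; the scale assumption on $p_1,p_2$ then forces $\|v_i\|_2\geq 1$ for some $i$, and since multiplication by $x_i$ preserves the coefficient $2$-norm, $\|[\det(Q)]_{\mathcal{B}}\|_2=\|x_i\det(Q)\|_2=\|v_i^{\top}\hat M_{\rho}\|_2\geq\sigma_{\min}(\hat M_{\rho})\|v_i\|_2\geq\sigma_{\min}(\hat M_{\rho})$. The essential missing ingredient in your argument is this adjugate identity, which produces a row combination with an \emph{a priori} lower bound on the multiplier norm, whereas your $c$ carries no such information.
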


\begin{proof}
Note that 
$$
Adj(Q)Q = 
\begin{pmatrix}
q_{22} & -q_{12} \\
-q_{21} & q_{11}
\end{pmatrix}
\begin{pmatrix}
q_{11} & q_{12} \\
q_{21} & q_{22}
\end{pmatrix}
=
\begin{pmatrix}
\det(Q) & 0 \\
0 & \det(Q)
\end{pmatrix},
$$
so $q_{22}f -q_{12}g = x_1 \det(Q)$ and $-q_{21}f +q_{11}g = x_2 \det(Q)$. These combinations are expressed as linear combinations of the rows of the Macaulay matrix $\hat{M}_{\rho}$.

As a consequence of \cref{lem:2DMacaulaymatrix}, because $\deg([\det(Q)]_{\mathcal{B}}) \leq \rho -1$ and $\deg(\det(Q)) \leq \rho -1$, we can reduce $\det(Q)$ to $[\det(Q)]_B$ using the rows of the Macaulay matrix $\hat{M}_{\rho-1}$, which are combinations $x_1^{\alpha_1}x_2^{\alpha_2}p_i$ with $|\alpha_1+\alpha_2| \leq \rho-\deg(p_i)-1$. We want to show that there is a choice of $Q$ for which $\det(Q) = [\det(Q)]_{\mathcal{B}}$. It suffices to show for any $\alpha_1,\alpha_2$ that there exists $Q'$ such that $Q' \begin{pmatrix} x_1 \\ x_2 \end{pmatrix} = \begin{pmatrix} p_1 \\ p_2 \end{pmatrix}$ and $\det(Q') = \det(Q) +x_1^{\alpha_1}x_2^{\alpha_2}p_1$. Note that $p_2 = p_2+x_1^{\alpha_1+1}x_2^{\alpha_2+1} - x_1^{\alpha_1+1}x_2^{\alpha_2+1}$. We split these new terms, giving
$$
Q' = \begin{pmatrix}
    q_{11} & q_{12} \\
    q_{21}-x_2x_1^{\alpha_1}x_2^{\alpha_2}& q_{22}+x_1x_1^{\alpha_1}x_2^{\alpha_2}
\end{pmatrix},
$$
with $\det(Q') = \det(Q) +x_1^{\alpha_1}x_2^{\alpha_2}p_1$. In addition, because $|\alpha_1+\alpha_2| \leq \rho-\deg(p_1)-1 = \deg(p_2)-2$, this does not change the degree of $p_2$. Therefore, we can choose $Q$ such that $\det(Q)$ is reduced. This, with the scale assumption on the polynomials, proves that one of the coefficient vectors of the combinations $q_{22}f -q_{12}g = x_1 \det(Q)$ and $-q_{21}f +q_{11}g = x_2 \det(Q)$ in the Macaulay matrix $\hat{M}_{\rho}$ has magnitude at least $1$, so $||[\det(Q)]_{\mathcal{B}}||_2 \geq \sigma_{\min}(\hat{M}_{\rho})$.
\end{proof}

Now we have the following refinements of \cref{thm:highdimMS} and \cref{thm:highdimMac} for $d=2$.
\begin{theorem} \label{thm:LowDimMS}
Let $p = (p_1,p_2)$ be a zero-dimensional polynomial system in \cref{eq:polysystem} with all simple roots and no roots at infinity. Let $\mathbf{x}^* = (x_1^*,x_2^*)$ be a root and write $p$ in the form \cref{eq:genpolyform}. Assume that $p_1$ and $p_2$ are scaled so that the maximum absolute value of their coefficients is at least $1$. Let $M_{x_i}: \mathbb{C}[x_1,x_2]/\langle p \rangle \to \mathbb{C}[x_1,x_2]/\langle p \rangle$ be a MS matrix corresponding to multiplication by $x_i$ in a basis $\mathcal{B}$. Then, the absolute condition number of the eigenvalue $x_i^*$ satisfies
$$
\kappa_{\text{eig}}(x_i^*) \geq \frac{\sigma_{\min}(\hat{M}_{\rho})||\mathcal{B}(x_1^*,x_2^*)||_2}{\lvert \det(J(x_1^*,x_2^*)) \rvert},
$$
where $J$ is the Jacobian of $p$ and $\hat{M}_{\rho}$ is the Macaulay matrix that is used to select $\mathcal{B}$.
\end{theorem}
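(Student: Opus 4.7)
The plan is to assemble the stated bound by directly combining two results already established in the paper: the exact conditioning formula for normal form eigenproblems in Theorem~\ref{thm:highdimMS}, and the lower bound on the normal-form coefficient vector $\|[\det(Q)]_{\mathcal{B}}\|_2$ proved in Proposition~\ref{prop:2DMacSV}. Essentially no new machinery is needed; the statement is a specialization of the general conditioning analysis to the bivariate case, where the singular value $\sigma_{\min}(\hat{M}_{\rho})$ of the Macaulay matrix becomes an accessible proxy for the hidden quantity $\|[\det(Q)]_{\mathcal{B}}\|_2$.

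First I would apply Theorem~\ref{thm:highdimMS} to the bivariate system $p = (p_1,p_2)$. Writing $p$ in the form \cref{eq:genpolyform} for some $Q \in \mathbb{C}[x_1,x_2]^{2 \times 2}$ with $\det(Q(\mathbf{x}^*)) = \det(J(\mathbf{x}^*))$, the theorem gives the exact identity
$$
\kappa_{\text{eig}}(x_i^*) \;=\; \frac{\|[\det(Q)]_{\mathcal{B}}\|_2 \, \|\mathcal{B}(\mathbf{x}^*)\|_2}{|\det(J(\mathbf{x}^*))|}\,(1 + |x_i^*|).
$$
Since $1 + |x_i^*| \geq 1$, dropping this factor only weakens the equality to an inequality and is harmless for a lower bound.

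Next, I would invoke Proposition~\ref{prop:2DMacSV}, whose hypotheses match exactly the assumptions of the current theorem (no roots at infinity, simple roots, and the scaling condition that the maximum absolute coefficient of $p_1$ and $p_2$ is at least $1$, together with $\mathcal{B}$ being a subset of the set of basis monomials used to form $\hat{M}_{\rho}$). That proposition yields
$$
\|[\det(Q)]_{\mathcal{B}}\|_2 \;\geq\; \sigma_{\min}(\hat{M}_{\rho}).
$$
Substituting this lower bound into the displayed formula above produces
$$
\kappa_{\text{eig}}(x_i^*) \;\geq\; \frac{\sigma_{\min}(\hat{M}_{\rho})\,\|\mathcal{B}(x_1^*,x_2^*)\|_2}{|\det(J(x_1^*,x_2^*))|},
$$
which is the claimed inequality.

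Because all of the heavy lifting is done in the preceding results, there is no substantive obstacle: the only thing to check carefully is that the basis $\mathcal{B}$ appearing in the statement of the theorem is consistent between the two invoked results, i.e.\ that $\mathcal{B}$ is indeed chosen from the set of basis monomials of $\hat{M}_{\rho}$ (which is the standing assumption for normal form methods in \cref{sec:MS}). With this consistency in place, the proof is a one-line combination of Theorem~\ref{thm:highdimMS} and Proposition~\ref{prop:2DMacSV}.
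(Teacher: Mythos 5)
Your argument is exactly the paper's proof: substitute the lower bound from Proposition~\ref{prop:2DMacSV} into the exact formula of Theorem~\ref{thm:highdimMS} and drop the harmless factor $(1+|x_i^*|)\geq 1$. No differences worth noting.
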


\begin{proof}
It follows by substituting \cref{prop:2DMacSV} into \cref{thm:highdimMS}.
\end{proof}

\begin{theorem} \label{thm:LowDimMac}
Let $p = (p_1,p_2)$ be a zero-dimensional polynomial system in \cref{eq:polysystem} with all simple roots and no roots at infinity. Let $\mathbf{x}^* = (x_1^*,x_2^*)$ be a root and write $p$ in the form \cref{eq:genpolyform}. Assume that $p_1$ and $p_2$ are scaled so that the maximum absolute value of their coefficients is at least $1$. Let $M = A-\lambda B$ be the Macaulay matrix eigenproblem in \cref{eq:MacEVP} for $p$ constructed with a random linear polynomial $h$. Then the absolute condition number of the eigenvalue $\lambda^*$ corresponding to the root $(x_1^*,x_2^*)$ satisfies
$$
\kappa_{\text{eig}}(\lambda^*) \geq \frac{\sigma_{\min}(A_1)||V(x_1^*,x_2^*)||_2}{\vert \det(J(x_1^*,x_2^*)) h(x_1^*,x_2^*) \rvert},
$$
where $J$ is the Jacobian of $p$ and $V(x_1^*,x_2^*)$ is the evaluation of the column labels at $(x_1^*,x_2^*)$.
\end{theorem}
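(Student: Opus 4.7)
The plan is to follow the same template as the proof of Theorem \ref{thm:LowDimMS}: combine the general bivariate/multivariate bound from Theorem \ref{thm:highdimMac} with the matrix--theoretic estimate from Proposition \ref{prop:2DMacSV}. Recall that Theorem \ref{thm:highdimMac} already provides
$$
\kappa_{\text{eig}}(\lambda^*) \geq \frac{\|[\det(Q)]_{\mathcal{B}}\|_2 \, \|V(\mathbf{x}^*)\|_2}{|\det(J(\mathbf{x}^*)) h(\mathbf{x}^*)|},
$$
and Proposition \ref{prop:2DMacSV}, under the same coefficient scaling hypothesis that appears in the statement, gives $\|[\det(Q)]_{\mathcal{B}}\|_2 \geq \sigma_{\min}(\hat{M}_\rho)$ whenever $\mathcal{B}$ is selected from the set of basis monomials indexing the columns of the Macaulay matrix $\hat{M}_\rho$.

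First I would identify $A_1$ with $\hat{M}_\rho$. From the block description in \cref{eq:MacEVP}, the matrix $A_1$ is exactly the collection of rows of the full Macaulay matrix that correspond to monomial multiples of $p_1$ and $p_2$ of total degree at most $\rho$, which is precisely how $\hat{M}_\rho$ was defined in \cref{sec:biv}. Consequently $\sigma_{\min}(A_1) = \sigma_{\min}(\hat{M}_\rho)$. Next I would confirm that the basis $\mathcal{B}$ appearing implicitly in the Macaulay eigenproblem coincides with the one appearing in Proposition \ref{prop:2DMacSV}: the analysis leading up to Theorem \ref{thm:highdimMac} shows that the monomials labeling the $h$-rows must form a basis of $\mathbb{C}[x_1,x_2]/\langle p_1,p_2\rangle$ for the problem to be nonsingular, and these monomials are a subset of the column labels of $\hat{M}_\rho$. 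Hence Proposition \ref{prop:2DMacSV} applies verbatim.

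With these two identifications in place, substituting $\|[\det(Q)]_{\mathcal{B}}\|_2 \geq \sigma_{\min}(\hat{M}_\rho) = \sigma_{\min}(A_1)$ into the bound from Theorem \ref{thm:highdimMac} yields
$$
\kappa_{\text{eig}}(\lambda^*) \geq \frac{\sigma_{\min}(A_1)\,\|V(x_1^*,x_2^*)\|_2}{|\det(J(x_1^*,x_2^*))\, h(x_1^*,x_2^*)|},
$$
which is the desired inequality. The only mildly delicate step is the bookkeeping identification of $A_1$ with $\hat{M}_\rho$ and of the two copies of $\mathcal{B}$; since both heavy pieces of machinery (Theorem \ref{thm:highdimMac} and Proposition \ref{prop:2DMacSV}) are already in place, the remainder of the argument is a single substitution with no further computation.
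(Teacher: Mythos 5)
Your proof is correct and follows exactly the same route as the paper: substitute the lower bound $\|[\det(Q)]_{\mathcal{B}}\|_2 \geq \sigma_{\min}(\hat{M}_{\rho})$ from Proposition \ref{prop:2DMacSV} into the bound of Theorem \ref{thm:highdimMac}, then identify $\hat{M}_{\rho}$ with $A_1$. The extra bookkeeping you supply (verifying that the $h$-row monomials form the basis $\mathcal{B}$ and are a subset of the column labels) is correct and makes the one-line argument in the paper more explicit.
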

\begin{proof}
It follows by substituting \cref{prop:2DMacSV} into \cref{thm:highdimMac}, noting that $\hat{M}_{\rho} = A_1$.
\end{proof}

Small singular values of $\hat{M}_{\rho}$ lead to conditioning issues for the two principal numerical constructions of MS matrices in \cite{telen2018normalform, telen2018normalform2, mourrain2021normalform}. These methods divide into column space methods,  where one selects columns from $\hat{M}_{\rho}$ to make up an invertible submatrix, and null space methods, where one computes the null space of $\hat{M}_{\rho}$. Small singular values of $\hat{M}_{\rho}$ cause conditioning issues for column space methods, as any submatrix is near-singular. It also affects null space methods (see~\cref{app:SVDPerturbation}). Thus, in two variables,  by considering both the construction and the eigenproblem, the conditioning of normal form methods is still closely related to $\lvert \det(J(\mathbf{x}))^{-1} \rvert$. Practical experiments confirm this (see \cref{fig:notdevex2D} \textbf{(a)}).

In \cite{jonsson2005macaulay}, it is noted that the Macaulay eigenproblem can be reduced to eliminate the infinite eigenvalues. Given the form of \cref{eq:MacEVP}, we can find $Z$ such that $A_1Z = 0$; then the problem reduces to $(A_2Z - \lambda B_2Z) x = 0$. The problem of finding the eigenvalues for the original and reduced problem are equivalent. Moreover, the eigenvectors corresponding to finite eigenvalues give a basis for the right null space of $A_1$. As we are focused on global rootfinding, finding the eigenvectors for the original problem is equivalent to reducing (finding $Z$) and then solving the reduced problem. The reduced problem has the same eigenvalue condition number as the original, and computing $Z$ has a condition number proportional to $(\sigma_{\min}(A_1))^{-1}$ (see~\cref{app:SVDPerturbation}).  We conclude that in two dimensions, while the condition number of the eigenproblem may differ from $\lvert \det(J(\mathbf{x}^*))^{-1} \rvert$, the conditioning of the eigenvectors, from which the roots are obtained, in general, does not. We observe this behavior in practice (see \cref{fig:notdevex2D} \textbf{(b)}).

\begin{figure}
  \centering
  
  \begin{Overpic}{
  \begin{tikzpicture}
    \begin{axis}[
      title={Normal Form Method},
      height=2.3in,width=3.1in,xmin =-8,xmax=0,ymin=0,ymax=16,
      xlabel={$\log (\sigma)$}, ylabel={Digits of Accuracy}
      ]
      \addplot [color=teal,thick,dotted,mark = none] table[col sep=comma,x = {x}, y ={y2}] {NDdata.dat};
      \addplot [color=red,very thick,mark = none] table[col sep=comma,x = {x}, y ={y}] {NDdata.dat};
      \addplot [color=blue,very thick,only marks] table[col sep=comma,x = {x}, y ={S1}] {NDdata.dat};
       \end{axis}
  \end{tikzpicture}}
  \put(6,69){\textbf{(a)}}
  \put(31,47){\rotatebox{18.4}{\textcolor{teal}{Stable Performance}}}
  \put(22.5,40){\rotatebox{-18}{\textcolor{blue}{Practical}}}
  \put(42.5,35.5){\rotatebox{34}{\textcolor{blue}{Performance}}}
  \put(52,33){\rotatebox{34}{\textcolor{red}{Our Theory}}}
  \end{Overpic}
  \begin{Overpic}{
  \begin{tikzpicture}
  \pgfplotsset{every axis title/.style={at={(0.5,1)},above,yshift=-3pt}}
    \begin{axis}[
      title={Macaulay Resultant},
      height=2.3in,width=3.1in,xmin =-8,xmax=0,ymin=0,ymax=16,
      xlabel={$\log (\sigma)$} 
      ]
      \addplot [color=teal,thick,dotted,mark = none] table[col sep=comma,x = {x}, y ={y2}] {NDdata.dat};
      \addplot [color=red,very thick,mark = none] table[col sep=comma,x = {x}, y ={y}] {NDdata.dat};
      \addplot [color=blue,very thick,only marks] table[col sep=comma,x = {x}, y ={S2}] {NDdata.dat};
       \end{axis}
  \end{tikzpicture}}
  \put(0,73){\textbf{(b)}}
  \put(28,50){\rotatebox{18.5}{\textcolor{teal}{Stable Performance}}}
  \put(16,41.5){\rotatebox{-17}{\textcolor{blue}{Practical}}}
  \put(37.5,35.5){\rotatebox{34}{\textcolor{blue}{Performance}}}
  \put(45,31){\rotatebox{34}{\textcolor{red}{Our Theory}}}
  \end{Overpic}
  
  \vspace{-.6cm}
  
  \caption{Performance of a normal form method and the Macaulay resultant method on \cref{ex:notdev}. We plot the practical performance against the theoretical performance of a stable algorithm. The deviation of observations from our prediction is because of the proximity of roots for small values of $\sigma$.}
  \label{fig:notdevex2D}
\end{figure}
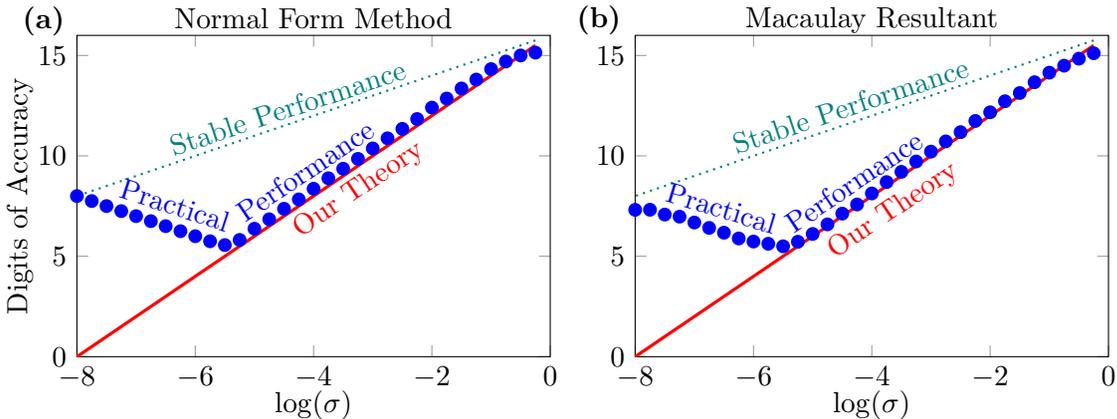

In higher dimensions, no characterization is possible in the manner of \cref{thm:LowDimMS} and \cref{thm:LowDimMac}. The following example in three dimensions can easily be generalized and demonstrates unavoidable deviation of the conditioning of the eigenproblems from $\lvert \det(J(\mathbf{x}^*))^{-1} \rvert$.

\begin{ex} \label{ex:3Ddev}
   Let
   $$
    \begin{pmatrix}
        p_1 \\
        p_2 \\
        p_3
    \end{pmatrix}
    = \begin{pmatrix}
    xy+ \sigma x^2+ \sigma y \\
    xy + \sigma y^2 + \sigma z\\
    xy + \sigma z^2 + \sigma x  \\
    \end{pmatrix} = \begin{pmatrix}
    y + \sigma x & \sigma & 0 \\
    y & \sigma y & \sigma \\
    y + \sigma & 0 & \sigma z
    \end{pmatrix}
    \begin{pmatrix}
    x \\
    y \\
    z 
    \end{pmatrix}.
    $$
    In the basis $\mathcal{B} = \{1,x,y,z,yz,xz,y^2z,xyz\}$, the Lagrange interpolant is
    $
    [\det(Q)]_{\mathcal{B}} = 
    \sigma^3 xyz + \sigma^2 y^2z + \sigma^2 y - \sigma^2yz + \sigma^3.
    $
    Then
    $
    ||[\det(Q)]_{\mathcal{B}}||_2 = O(\sigma^2),
    $
    so, for normal form methods,
    $
    \kappa_{\text{eig}}(0) = O(\sigma^{-1}).
    $
    However, in contrast to two dimensions, the smallest nonzero singular value of the Macaulay matrix $\hat{M}_{\rho}$ is only $O(\sigma^{-1})$. We can solve this in practice with an accuracy around $\sigma^{-2}$ (see \cref{fig:notdevex3D}), which beats the estimate given by $\det(J(0,0,0))^{-1} = \sigma^{-3}$. Thus,  a high-dimensional refinement of \cref{thm:highdimMS} in the manner of \cref{thm:LowDimMS} is impossible.
\end{ex}

\begin{figure}
  \centering
  
  \begin{Overpic}{
  \begin{tikzpicture}
    \begin{axis}[
      title={Normal Form Method},
      height=2.3in,width=3.1in,xmin =-8,xmax=0,ymin=0,ymax=16,
      xlabel={$\log (\sigma)$}, ylabel={Digits of Accuracy}
      ]
      \addplot [color=teal,thick,dotted,mark = none] table[col sep=comma,x = {x}, y ={y2}] {NDdata.dat};
      \addplot [color=black,thick,dotted,mark = none] table[col sep=comma,x = {x}, y ={y3}] {NDdata.dat};
      \addplot [color=red,very thick,mark = none] table[col sep=comma,x = {x}, y ={y}] {NDdata.dat};
      \addplot [color=blue,very thick,only marks] table[col sep=comma,x = {x}, y ={S3}] {NDdata.dat};
       \end{axis}
  \end{tikzpicture}}
  \put(31,47){\rotatebox{18.4}{\textcolor{teal}{Stable Performance}}}
  \put(20,30){\rotatebox{12}{\textcolor{blue}{Practical}}}
  \put(40,35.5){\rotatebox{34}{\textcolor{blue}{Performance}}}
  \put(51.5,32){\rotatebox{34}{\textcolor{red}{$\sigma^{-2}$}}}
  \put(55,20){\rotatebox{46}{\textcolor{black}{Jacobian Prediction}}}
  \end{Overpic}
  
  \vspace{-.6cm}
  
  \caption{Performance of a normal form method on \cref{ex:3Ddev}. We plot the practical performance against the theoretical performance of a stable algorithm, the predicted performance given by the line $\sigma^{-2}$, and the prediction given by the Jacobian, which demonstrates that this example can be solved more accurately than would be predicted by a direct analogy of \cref{thm:LowDimMS}.}
  \label{fig:notdevex3D}
\end{figure}
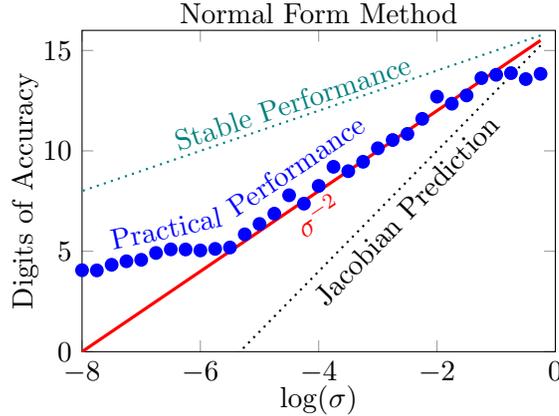

\appendix

\section{Technical Lemma for Rational Univariate Representation} \label{app:RURbound}
The following lemma allows us to bound the condition number of the univariate rootfinding problem constructed in the rational univariate representation. We use this bound to prove that solving the system in \cref{ex:Hypercube} using the rational univariate representation generates a univariate polynomial with an exponentially ill-conditioned root.

\begin{lemma} \label{lem:RURbound}
The function
$
f(u_1,\ldots,u_d) = \prod_{S \subseteq [d]} \sum_{i \in S} |u_i|
$
attains a maximum in the unit ball $\sum_{i = 1}^d \lvert u_i \rvert^2 \leq 1$ at $\mathbf{u}_0 = (d^{-1/2},\ldots,d^{-1/2})$. Moreover,
$
    f(\mathbf{u}_0) \leq (\sqrt{d})^{2^d-1}.
$
\end{lemma}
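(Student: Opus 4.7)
The plan is to reduce the problem to a concave optimization over the simplex, apply a symmetrization argument to locate the maximizer at $\mathbf{u}_0$, and then evaluate a direct upper bound. First, note that $f$ depends only on the absolute values $v_i = |u_i|$ and is homogeneous of positive degree $2^d - 1$, so the maximum on the closed unit ball is attained on the sphere $\sum_i v_i^2 = 1$ with $v_i \geq 0$. Substituting $t_i = v_i^2$ recasts the task as maximizing
$$
g(t) := \sum_{\emptyset \neq S \subseteq [d]} \log \sum_{i \in S} \sqrt{t_i}
$$
over the probability simplex $\Delta = \{t \in \mathbb{R}_{\geq 0}^d : \sum_i t_i = 1\}$, which is convex and permutation-symmetric.

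The key step is to show that $g$ is concave and symmetric on $\Delta$, forcing its maximum to occur at the centroid $t^* = (1/d, \ldots, 1/d)$. For each nonempty $S$, the map $t \mapsto \sum_{i \in S} \sqrt{t_i}$ is a sum of concave, positive functions, hence itself concave and positive; the logarithm of a positive concave function is concave, and summing over $S$ preserves concavity. Symmetry is immediate from the definition. Then for any $t \in \Delta$, the permutation average $\bar{t} = \frac{1}{d!}\sum_{\pi \in S_d} \pi \cdot t$ equals $t^*$ (the unique permutation-invariant point of $\Delta$), and Jensen's inequality combined with symmetry gives $g(t^*) = g(\bar{t}) \geq \frac{1}{d!} \sum_{\pi} g(\pi \cdot t) = g(t)$. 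Hence $\mathbf{u}_0$, corresponding to $t^*$, maximizes $f$ on the ball.

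Finally, to establish the quantitative bound, I would compute directly
$$
f(\mathbf{u}_0) = \prod_{\emptyset \neq S \subseteq [d]} \frac{|S|}{\sqrt{d}} = d^{-(2^d-1)/2} \prod_{k=1}^d k^{\binom{d}{k}} \leq d^{-(2^d-1)/2} \prod_{k=1}^d d^{\binom{d}{k}} = (\sqrt{d})^{2^d-1},
$$
using $k \leq d$ and $\sum_{k=1}^d \binom{d}{k} = 2^d - 1$. The main obstacle is the concavity-plus-symmetrization step: a direct attempt on the sphere in the original $v$-variables fails because the sphere is not convex, so the substitution $t_i = v_i^2$ is essential to make Jensen's inequality applicable. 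Once the simplex formulation is in place, the concavity of each logarithmic term is routine and the symmetrization pinning the maximum at $t^*$ is standard.
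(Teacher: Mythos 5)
Your proof is correct and takes a genuinely different route from the paper's. The paper factors $f = \prod_{m=1}^d f_m$ with $f_m = \prod_{|S|=m}\sum_{i\in S}|u_i|$ and shows each $f_m$ is maximized at the uniform point by a pairwise exchange argument: given a candidate maximizer with $v_k \neq v_\ell$, replace both coordinates by their average; the factors with $|S\cap\{k,\ell\}|\neq 1$ are unchanged, while those with exactly one of $k,\ell$ pair up as $(c+v_k)(c+v_\ell) < \bigl(c+\tfrac{v_k+v_\ell}{2}\bigr)^2$, strictly increasing $f_m$ while strictly decreasing the $\ell^2$ norm, a contradiction. Your approach instead substitutes $t_i = v_i^2$, which turns the constraint set into the simplex (convex!) and reveals $\log f$ as a sum of terms $\log\sum_{i\in S}\sqrt{t_i}$, each concave since $\log$ of a positive concave function is concave; symmetry plus Jensen on the permutation average then pins the maximum at the centroid in one stroke. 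Your route is the more conceptual one: it explains the location of the maximizer as a consequence of hidden concavity and symmetry, and extends immediately to any symmetric, log-concave-in-$t$ objective. The paper's route is more elementary (no convexity machinery) and, by working with each $f_m$ separately, shows the slightly stronger fact that every fixed-size factor is individually maximized at $\mathbf{u}_0$. Both then finish with the identical computation $f(\mathbf{u}_0) = \prod_{k=1}^d (k/\sqrt{d})^{\binom{d}{k}} \leq (\sqrt d)^{2^d-1}$. One small note for both: the lemma as stated writes $\prod_{S\subseteq[d]}$, which must be read as $\prod_{\emptyset\neq S\subseteq[d]}$ (otherwise the $S=\emptyset$ factor is $0$); you made this explicit, which is the right reading.
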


\begin{proof}
Let $\mathbf{u} = (u_1,\ldots,u_d)$ and $f_m(\mathbf{u}) = \prod_{\substack{S \subseteq [d] \\ |S| = m}} \sum_{i \in S} |u_i|$. We have
$
f(\mathbf{u}) = \prod_{m=1}^d \!f_m(\mathbf{u}),
$
so it suffices to show that $f_m(\mathbf{u})$ is maximized at $\mathbf{u}_0$ for each $m$.
Since $f_m$ is homogeneous, the maximum in the unit ball is the same as the maximum on the unit sphere. 
Suppose that there exists a point $\mathbf{v} \neq \mathbf{u}_0$ with $f_m(\mathbf{v})$ the maximum of $f_m$ on the unit sphere. We may assume $v_i \geq 0, 1 \leq i \leq d$. Because $\mathbf{v} \neq \mathbf{u}_0$, there exists $v_k \neq v_{\ell}$.  Let $v_i' = (v_k+v_\ell)/2$ if $i=k$ or $i=\ell$; otherwise,  set $v_i'=v_i$. 
Also,  let $
    A_m = \{S \subseteq [d] \mid |S| = m, |S \cap \{k,\ell\}| = 1\},
    B_m = \{S \subseteq [d] \mid |S| = m, |S \cap \{k,\ell\}| \neq 1\},$ and $
    D_m = \{S \subseteq [d] \mid |S| = m-1, S \cap \{k,\ell\} = \emptyset\}.
$
Then
\begin{align*}
    f_m(\mathbf{v}') 
    &= \left( \prod_{S \in B_m} \sum_{i \in S} v_i \right) \left( \prod_{S \in A_m} \sum_{i \in S} v_i' \right)
    = \left( \prod_{S \in B_m} \sum_{i \in S} v_i \right) \left( \prod_{S \in D_m} \left( \frac{v_k + v_{\ell}}{2} + \sum_{i \in S} v_i \right)^2 \right) \\
    &> \left( \prod_{S \in B_m} \sum_{i \in S} v_i \right) \left( \prod_{S \in D_m} \left( v_k + \sum_{i \in S} v_i \right)\left( v_{\ell} + \sum_{i \in S} v_i \right) \right) \\
    &= \left( \prod_{S \in B_m} \sum_{i \in S} v_i \right) \left( \prod_{S \in A_m} \sum_{i \in S} v_i \right)
    = f_m(\mathbf{v}).
\end{align*}

\noindent Thus $f_m(\mathbf{v}') > f_m(\mathbf{v})$, so the maximum is attained at $\mathbf{u}_0$, and 
\begin{equation*}
f(\mathbf{u}_0) = \prod_{m=1}^d \left( \frac{m}{\sqrt{d}} \right)^{\binom{d}{m}} \leq  \left(\frac{d}{\sqrt{d}} \right)^{\sum_{m=1}^d \binom{d}{m}} \leq  \left(\sqrt{d} \right)^{2^d-1}.
\end{equation*}
\end{proof}

The quantity $f(\mathbf{u})$ scales the absolute condition number of a univariate polynomial constructed in the rational univariate representation; intuitively, we have used the scale restriction $\sum_{i = 1}^d \lvert u_i \rvert ^2 \leq 1$ on the projection to impose a further restriction on the univariate polynomial.

\section{An Analytic Decomposition for Complex Square Matrices}\label{app:Analytic SVD}
We use the following extension of \cite[Theorem 1]{bunse1991analyticsvd} in the proof of \cref{prop:Bormultieigcond} to relate the conditioning of the GEPs in \cref{eq:opdetgeneig} to the Jacobian of the system in \cref{eq:polysystem}.

\begin{theorem} \label{thm:analyticSVD}
    Let $Z(t)$ be a function of a complex variable $t$ such that $Z(t) \in \mathbb{C}^{d \times d}$  and $Z(t)$ is analytic for all $t$ in some real interval around a point $t_0$. Then there exists a decomposition on some (possibly distinct) real interval around $t_0$
    \begin{equation}
    Z(t) = U(t)S(t)V(t)^H,
    \end{equation}
    where $U(t),S(t),V(t)$ are analytic functions with $U(t),V(t)$ unitary, and $S(t)$ real diagonal.
\end{theorem}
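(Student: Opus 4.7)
The plan is to adapt the proof of \cite[Theorem 1]{bunse1991analyticsvd} from real matrices and orthogonal transformations to complex matrices and unitary transformations. Only notational changes are required: replace $\cdot^\top$ with $\cdot^H$ and ``orthogonal'' with ``unitary'' throughout. The two key analytic inputs---Rellich's theorem on smooth eigenvalue/eigenvector decompositions of Hermitian-analytic families, and the existence of real-analytic signed square roots of nonnegative real-analytic functions---hold unchanged in the complex Hermitian setting, so the argument carries over almost verbatim.

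First, I would form $Z(t)^H Z(t)$, which is Hermitian-analytic and positive-semidefinite on a real interval around $t_0$. Rellich's theorem produces real-analytic (but not necessarily ordered) eigenvalues $\mu_1(t),\ldots,\mu_d(t)$ of $Z(t)^H Z(t)$ together with an analytic unitary matrix $V(t)$ whose columns are a corresponding orthonormal basis of eigenvectors. Since each $\mu_i(t)\geq 0$ has only zeros of even order in $t$, it admits a real-analytic square root up to a choice of sign at each zero; let $s_i(t)$ be such a signed root and set $S(t) = \diag(s_1(t),\ldots,s_d(t))$, which is real diagonal and analytic.

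Finally, I would define $U(t)$ through $Z(t)V(t) = U(t)S(t)$. On the open dense subset where every $s_i(t)\neq 0$ the columns are simply $U(t)e_i = s_i(t)^{-1} Z(t) V(t) e_i$, and the identity $\|Z(t)V(t)e_i\|_2^2 = \mu_i(t) = s_i(t)^2$ ensures unitarity. The main obstacle---exactly the technical core of \cite{bunse1991analyticsvd}---is extending $U(t)$ analytically across isolated zeros of the $s_i$. Locally near such a zero $t_1$, one factors $s_i(t) = (t-t_1)^{k_i}\tilde{s}_i(t)$ with $\tilde{s}_i(t_1)\neq 0$; because $\|Z(t)V(t)e_i\|_2 = |s_i(t)|$, the vector $Z(t)V(t)e_i$ vanishes to order $k_i$ at $t_1$, and, after a consistent sign choice in $s_i$, the quotient extends to an analytic column. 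For columns corresponding to simultaneous zeros, analyticity and unitarity are preserved by an analytic Gram--Schmidt completion, exactly as in the real case. With these substitutions, the proof of \cite[Theorem~1]{bunse1991analyticsvd} yields the claimed decomposition on a (possibly smaller) real interval around $t_0$.
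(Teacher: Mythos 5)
Your proposal outlines a valid argument, but it is not a mere notational change to Bunse-Gerstner's proof, and it takes a genuinely different route from the paper's. The paper (following Bunse-Gerstner) forms the Hermitian block matrix $M(t)=\bigl[\begin{smallmatrix}0 & Z(t)\\ Z(t)^H & 0\end{smallmatrix}\bigr]$, applies Kato's theorem on analytic Hermitian families to get $M=Q\Lambda Q^H$ with $Q$ analytic unitary and $\Lambda$ analytic real diagonal, and then exploits the $\pm\lambda$ symmetry of the block eigenstructure to permute $Q$ into the form $\tfrac{1}{\sqrt{2}}\bigl[\begin{smallmatrix}U & U\\ V & -V\end{smallmatrix}\bigr]$, from which $U$, $S$, $V$ are read off in one shot. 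You instead apply Rellich/Kato to $Z^HZ$, take analytic signed square roots $s_i$ of its eigenvalues, and then manufacture $U$ by dividing the columns of $ZV$ by the $s_i$ and analytically extending across their zeros (with an analytic orthonormal completion when some $\mu_i\equiv 0$). Both routes succeed; the block-matrix approach is the slicker of the two because it obtains $U$ and $V$ simultaneously from a single invocation of Kato and entirely sidesteps the division-by-$s_i$ step, whereas your sketch rests on three claims left at the level of assertion: that each column of $ZV$ vanishes to the same order as the corresponding $s_i$, that the quotient columns remain orthonormal at those zeros, and that an analytic orthonormal completion of $U$ exists when some column of $ZV$ vanishes identically. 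These all hold (the first two via a componentwise order-of-vanishing argument using $\|Z V e_i\|^2=s_i^2$ and analytic continuation of the vanishing inner products, the third via analyticity of the orthogonal complement of an analytic isometric family), but they constitute precisely the technical work that the paper's block construction is designed to avoid.
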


\begin{proof}
The proof is similar to \cite[Theorem 1]{bunse1991analyticsvd}.
    \cite[pp. 120-122]{kato1995perturbation} gives that for some interval around $t_0$ there exists an analytic eigendecomposition
    \begin{equation}
    M(t) = \begin{bmatrix} \label{eq:analyticeigen}
        0 & Z(t) \\
        Z(t)^H & 0
    \end{bmatrix} = Q(t) \Lambda(t) Q(t)^H,
    \end{equation}
    with $Q(t)$ unitary and $\Lambda(t)$ real diagonal. The eigenvalues and eigenvectors of $M(t)$ can be paired in the following manner. If $\smash{\begin{bmatrix}
        u(t) &
        v(t)
    \end{bmatrix}^H}$
    is an eigenvector associated with an eigenvalue $\lambda(t)$, then $\smash{\begin{bmatrix}
        u(t) &
        -v(t)
    \end{bmatrix}^H}$
    is an eigenvector associated with the eigenvalue $-\lambda(t)$. Thus permuting \cref{eq:analyticeigen} gives
    $$
    Q_1(t) = \frac{1}{\sqrt{2}}\begin{bmatrix}
        U(t) & U(t) \\
        V(t) & -V(t)
    \end{bmatrix},
    \qquad
    \Lambda_1(t) = \begin{bmatrix}
        S(t) & 0 \\
        0 & -S(t)
    \end{bmatrix},
    $$
    such that
    $
    M(t) = Q_1(t)\Lambda_1(t)Q_1(t)^H,
    $
    where $S$ is real diagonal and $U$ and $V$ are unitary, which implies that
    $
    Z(t) = U(t)S(t)V(t)^H.
    $
\end{proof}
This can not properly be called an analytic SVD, as the singular values are unordered. Still, it is sufficient for the proof of \cref{prop:Bormultieigcond}.

\section{Proof of the Form of Lagrange Interpolants} \label{app:MSEVproof}
To understand the conditioning of the MS and Macaulay eigenproblems we need to understand their eigenvectors. For both methods, either the left or the right eigenvector for an eigenvalue corresponding to a root $(x_1^*,\ldots,x_d^*)$ is related to the Lagrange interpolant that vanishes at every root of the system in \cref{eq:polysystem} except for $(x_1^*,\ldots,x_d^*)$. Therefore, we need a precise formula to generate this interpolant for any root. The following extends \cite[Lemma 7.1]{parkinson2022normalform}.

\begin{proposition}
Suppose $Q \in \mathbb{C}[x_1,\ldots,x_d]^{d \times d}$ and $p$ is a polynomial system in \cref{eq:polysystem} with
\begin{equation}
p_i = r_i(x_1,...,x_d)(x_i-x_i^*) + \sum_{j=1}^d q_{i j}(x_1,\ldots,x_d) (x_j-x_j^*), \quad 1 \leq i \leq d,
\end{equation}
with $r_i \in \mathbb{C}[x_1,\ldots,x_d]$, $q_{ij}$ the $(i,j)$ entry of $Q$, and $(x_1^*,\ldots,x_d^*) \in \mathbb{C}^d$. Then the Lagrange interpolant that vanishes at every root of $p$ except $(x_1^*,\ldots,x_d^*)$ is
\begin{equation}
q = \sum_{\imath \subseteq [d]} \det(Q_\imath) \prod_{k \in \imath} r_k,
\end{equation}
where $[d] = \{1,...,d\}$ and $Q_\imath$ is $Q$ with the $\imath$-th rows and columns removed.
\end{proposition}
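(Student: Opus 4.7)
The plan is to recognize that the expression for $q$ is exactly the determinant of an augmented matrix, and then exploit the fact that this augmented matrix acts on $\mathbf{x} - \mathbf{x}^*$ to give $p$. First, I would define $\tilde{Q} \in \mathbb{C}[x_1,\ldots,x_d]^{d \times d}$ by
$$
\tilde{Q} = Q + \diag(r_1,\ldots,r_d),
$$
so that the hypothesis on $p$ reads $p(\mathbf{x}) = \tilde{Q}(\mathbf{x})\,(\mathbf{x}-\mathbf{x}^*)$ componentwise. The entire proof then reduces to establishing the two claims: (a) $q = \det(\tilde{Q})$ as polynomials, and (b) $\det(\tilde{Q})$ is a Lagrange interpolant at $\mathbf{x}^*$.

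For (a), I would expand $\det(\tilde{Q})$ by multilinearity of the determinant in the columns. Writing the $i$-th column of $\tilde{Q}$ as (column $i$ of $Q$)\,$+\,r_i\mathbf{e}_i$, multilinearity gives
$$
\det(\tilde{Q}) = \sum_{\imath \subseteq [d]} \Bigl(\prod_{k \in \imath} r_k\Bigr) \det(M_\imath),
$$
where $M_\imath$ is the matrix with $\mathbf{e}_i$ in column $i$ for $i \in \imath$ and the original column of $Q$ elsewhere. Cofactor expansion along the columns indexed by $\imath$ collapses $\det(M_\imath)$ to the principal minor obtained by striking out rows and columns in $\imath$, which is exactly $\det(Q_\imath)$. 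Matching the resulting formula to the one stated in the proposition gives $q = \det(\tilde{Q})$.

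For (b), suppose $\mathbf{y} \in \mathcal{V}(p_1,\ldots,p_d)$ with $\mathbf{y} \neq \mathbf{x}^*$. Then $\tilde{Q}(\mathbf{y})(\mathbf{y} - \mathbf{x}^*) = p(\mathbf{y}) = 0$ and $\mathbf{y}-\mathbf{x}^* \neq 0$, so $\tilde{Q}(\mathbf{y})$ is singular and hence $q(\mathbf{y}) = \det(\tilde{Q}(\mathbf{y})) = 0$. At $\mathbf{x}^*$, differentiating $p_i = \sum_j \tilde{Q}_{ij}(x_j-x_j^*)$ and evaluating at $\mathbf{x}^*$ kills the terms containing a factor $(x_j - x_j^*)$ and yields $J(\mathbf{x}^*) = \tilde{Q}(\mathbf{x}^*)$. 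Since the roots are assumed simple, $\det(J(\mathbf{x}^*)) \neq 0$, so $q(\mathbf{x}^*) = \det(J(\mathbf{x}^*)) \neq 0$, confirming that $q$ is a Lagrange interpolant at $\mathbf{x}^*$.

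The only mildly delicate step is the multilinear expansion in (a); everything else is a clean consequence of the factored form $p = \tilde{Q}(\mathbf{x}-\mathbf{x}^*)$. No new tools beyond cofactor expansion and the simplicity of $\mathbf{x}^*$ are needed.
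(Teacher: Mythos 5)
Your proof is correct, and it follows a genuinely different route than the paper's. The paper proves the proposition by exhibiting, for each $i$, an explicit polynomial combination witnessing
$(x_i-x_i^*)\,q \in \langle p_1,\ldots,p_d\rangle$,
via a somewhat lengthy cofactor manipulation; that formulation is the quotient-ring statement that $q$ represents an eigenvector of $M_{x_i}$ with eigenvalue $x_i^*$, and the paper then deduces the interpolation property from ideal membership. You instead absorb the diagonal into $\tilde{Q}=Q+\diag(r_1,\ldots,r_d)$ so that $p(\mathbf{x})=\tilde{Q}(\mathbf{x})(\mathbf{x}-\mathbf{x}^*)$, identify the stated sum with $\det(\tilde{Q})$ by column-multilinearity (I checked the sign bookkeeping in the cofactor collapse to $\det(Q_\imath)$; it is clean because the replaced columns are $r_i\mathbf{e}_i$, placing the nonzero entry on the diagonal), and then observe that $\tilde{Q}(\mathbf{y})$ has the nonzero kernel vector $\mathbf{y}-\mathbf{x}^*$ at any other root $\mathbf{y}$, hence $q(\mathbf{y})=0$, while $\tilde{Q}(\mathbf{x}^*)=J(\mathbf{x}^*)$ is nonsingular by simplicity of the root. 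Your approach is shorter and more conceptually transparent, and it additionally surfaces the useful normalization $q(\mathbf{x}^*)=\det(J(\mathbf{x}^*))$, which the paper indeed relies on later. The paper's ideal-membership computation is heavier but buys a directly algebraic certificate (a Positivstellensatz-style identity) rather than relying on the equivalence, for radical zero-dimensional ideals, between ``vanishes at all roots'' and ``lies in the ideal''; since the paper assumes simple roots throughout, that equivalence is available, and both proofs are valid. One small thing worth making explicit in your write-up: you silently invoke simplicity of $\mathbf{x}^*$ to get $\det(J(\mathbf{x}^*))\neq 0$; this is a standing hypothesis of the paper, but it should be cited, since without it $q$ could vanish identically on the variety.
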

\begin{proof}
To see this, we show that
\begin{equation} \label{xq1}
x_i q = \sum_{j = 1}^d p_j \left( \sum_{\imath \subseteq [d]\backslash \{i,j\}} \cof_{ji} (Q_{\imath}) \prod_{k \in \imath} r_k \right),
\end{equation}
where $\cof_{ji} (Q_{\imath})$ denotes the cofactor of $Q$ obtained by removing rows $\imath \cup \{j\}$ and columns $\imath \cup \{i\}$ from Q.  The right-hand side of \cref{xq1} splits as
$$
 \sum_{\ell=1}^d \left[ r_\ell x_\ell \left( \sum_{\imath \subseteq [d]\backslash \{i,\ell\}} \cof_{\ell i} (Q_{\imath}) \prod_{k \in \imath} r_k \right) +  \sum_{j = 1}^d q_{j \ell}x_\ell  \left( \sum_{\imath \subseteq [d]\backslash \{i,j\}} \cof_{ji} (Q_{\imath}) \prod_{k \in \imath} r_k \right) \right].
$$
When $\ell = i$, this becomes
\begin{align*}
    & \left[ r_i x_i \left( \sum_{\imath \subseteq [d]\backslash \{i\}} \cof_{i i} (Q_{\imath}) \prod_{k \in \imath} r_k \right) +  \sum_{j = 1}^d q_{j i}x_i  \left( \sum_{\imath \subseteq [d]\backslash \{i,j\}} \cof_{ji} (Q_{\imath}) \prod_{k \in \imath} r_k \right) \right] \\
    &\qquad = x_i \left[ \left( \sum_{\substack{\imath \subseteq [d]\\ i \in \imath}} \det(Q_{\imath}) \prod_{k \in \imath} r_k \right) +  \left( \sum_{\imath \subseteq [d]\backslash \{i\}} \sum_{j \notin \imath} q_{j i}  \cof_{ji} (Q_{\imath}) \prod_{k \in \imath} r_k \right) \right]. \\
    & \hspace{-.2cm} \text{The inner sum of the right-hand term is a cofactor expansion of } \det(Q_{\imath}), \text{ so we have} \\
    &\qquad = x_i \left[ \left( \sum_{\substack{\imath \subseteq [d]\\ i \in \imath}} \det(Q_{\imath}) \prod_{k \in \imath} r_k \right) +  \left( \sum_{\imath \subseteq [d]\backslash \{i\}} \det(Q_{\imath}) \prod_{k \in \imath} r_k \right) \right]
    = x_i\sum_{\imath \subseteq [d]} \det(Q_{\imath}) \prod_{k \in \imath} r_k = x_i q.
\end{align*}
\vspace{-.2cm}
Therefore, it suffices to show that the terms of \cref{xq1} vanish when $\ell \neq i$. In this case
\begin{align*}
    & \left[ r_\ell x_\ell \left( \sum_{\imath \subseteq [d]\backslash \{i,\ell\}} \cof_{\ell i} (Q_{\imath}) \prod_{k \in \imath} r_k \right) +  \sum_{j = 1}^d q_{j \ell}x_\ell  \left( \sum_{\imath \subseteq [d]\backslash \{i,j\}} \cof_{ji} (Q_{\imath}) \prod_{k \in \imath} r_k \right) \right] \\
    &\qquad = x_\ell \left[ r_\ell \left( \sum_{\imath \subseteq [d]\backslash \{i, \ell\}} \cof_{\ell i} (Q_{\imath}) \prod_{k \in \imath} r_k \right) +  \left( \sum_{\imath \subseteq [d]\backslash \{i\}} \sum_{j \notin \imath} q_{j \ell}  \cof_{ji} (Q_{\imath}) \prod_{k \in \imath} r_k \right) \right]. \\
    & \hspace{-.8cm} \text{If $\ell \notin \imath$ then the right-hand sum vanishes, so we can factor out $r_{\ell}$ to obtain} \\
    &\qquad = x_\ell r_\ell \left[ \left( \sum_{\imath \subseteq [d]\backslash \{i, \ell\}} \cof_{\ell i} (Q_{\imath}) \prod_{k \in \imath} r_k \right) +  \left( \sum_{\imath \subseteq [d]\backslash \{i, \ell\} } \sum_{j \notin \imath} q_{j \ell}  \cof_{ji} (Q_{\imath \cup \{ \ell\}}) \prod_{k \in \imath} r_k \right) \right].
\end{align*}
Now note that $\sum_{j \notin \imath} q_{j \ell}  \cof_{ji} (Q_{\imath \cup \{ \ell\}})$ is the cofactor expansion of $-\cof_{\ell i}Q_\imath$ along column $\ell$ so the entire expression vanishes. This proves the desired claim.
\end{proof}
We use the characterization in the proofs of \cref{thm:highdimMS}, \cref{thm:highdimMac}, \cref{thm:LowDimMS}, and \cref{thm:LowDimMac} to relate the conditioning of the eigenproblem to the Jacobian of \cref{eq:polysystem}.

\section{Perturbation Theory for the SVD} \label{app:SVDPerturbation}
In \cref{sec:biv}, we refine our characterization of the condition number of the MS and Macaulay resultant eigenproblems by examining the singular values of the Macaulay matrix $\hat{M}_{\rho}$. We claim that small singular values make the calculation of its null space unstable. Given a matrix $A$, partition the SVD
$$
A = \begin{bmatrix}
    U_s & U_{\perp}
\end{bmatrix}
\begin{bmatrix}
    \Sigma_s & \\
    & 0
\end{bmatrix}
\begin{bmatrix}
    V_s^H \\
    V_{\perp}^H
\end{bmatrix}.
$$
Then let $\tilde{A} = A+N$ where $N$ is a perturbation of norm $\epsilon$. From \cite{vaccaro1994svd}, an orthonormal basis for the perturbed null space $\tilde{U}_{\perp}$ is given by
$
(U_{\perp} + U_sQ)(I+QQ^H)^{-1/2}.
$
A first-order approximation for $Q$ is
$
Q \stackrel{_1}{=} -\Sigma_s^{-1} V_s^H N^H U_{\perp},
$
where the symbol $\stackrel{_1}{=}$ denotes equality up to first order.
As in \cite{stewart1973perturbation}, such an expansion can be connected to the gap $\gamma(U,U_{\perp})$, where
$$
\gamma(X,Y) = \max \left\{ \sup_{\substack{||x|| =1 \\ x \in X}} \inf_{y \in Y} ||x-y||,\sup_{\substack{||y|| =1 \\ y \in Y}} \inf_{x \in X} ||y-x|| \right\}.
$$
In particular, \cite[pp. 735-736]{stewart1973perturbation} gives us that the cosines of the canonical angles between $U_{\perp}$ and $\Tilde{U}_{\perp}$ are the singular values of
$
    (I+QQ^H)^{-1/2} \stackrel{_1}{=} (I+\epsilon^2\Sigma_s^{-2})^{-1/2}.
$
In particular
$
\cos(\theta_1) \stackrel{_1}{=} (1+\epsilon^2 \sigma_{\min}^{-2})^{-1/2} \stackrel{_1}{=} (1-\epsilon^2 \sigma_{\min}^{-2})^{1/2}.
$
So, by \cite[Corollary 2.6]{stewart1973perturbation},
$\gamma(U,U_{\perp}) = \sin(\theta_1) \stackrel{_1}{=} \epsilon \sigma_{\min}^{-1}$.
This analysis shows that small singular values in $\hat{M}_{\rho}$ make null space calculations of MS matrices unstable and worsen the eigenvector conditioning of the Macaulay resultant eigenproblem, which aligns with practical results in \cref{fig:notdevex2D}.

\section*{Acknowledgments}
We thank Vanni Noferini for many discussions over the years on the subject of this manuscript.   We also thank Sujit Rao, who as an undergraduate at Cornell, began to look at the numerical stability of algebraic geometric rootfinders.  In particular, the stability argument for the rational univariate representation method was sketched out by him.

\bibliographystyle{siamplain}
\bibliography{InstabilityofRootfinders}
\end{document}